\author{Daniel Pryor}
\title{Special Open Sets in Manifold Calculus}
\theoremstyle{plain}
\newtheorem{prop}{Proposition}[section]
\newtheorem{lemma}[prop]{Lemma}
\newtheorem{cor}[prop]{Corollary}
\newtheorem{thm}[prop]{Theorem}
\theoremstyle{definition}
\newtheorem{defn}[prop]{Definition}
\newtheorem{notation}[prop]{Notation}
\newtheorem{rmk}[prop]{Remark}
\newtheorem{example}[prop]{Example}
\newcommand{\abs}[1]{\left|#1\right|}
\newcommand{\parens}[1]{\left(#1\right)}
\newcommand{\bracks}[1]{\left[#1\right]}
\newcommand{\braces}[1]{\left\{#1\right\}}
\newcommand{\reals}{\mathbb{R}}
\newcommand{\conj}[1]{\overline{#1}}
\newcommand{\vphi}{\varphi}
\newcommand{\mc}[1]{\mathcal{#1}}
\newcommand{\wh}[1]{\widehat{#1}}
\newenvironment{enumabc}
{\begin{enumerate}}
{\end{enumerate}}
\DeclareMathOperator{\Tot}{Tot}
\DeclareMathOperator{\crep}{crep}
\DeclareMathOperator{\srep}{srep}
\DeclareMathOperator*{\holim}{holim}
\DeclareMathOperator*{\hocolim}{hocolim}
\DeclareMathOperator{\Map}{Map}
\DeclareMathOperator{\mps}{mps}
\DeclareMathOperator{\Emb}{Emb}
\DeclareMathOperator{\Imm}{Imm}
\DeclareMathOperator{\Aut}{Aut}
\DeclareMathOperator{\const}{const}
\newcommand{\ctgy}[1]{\mathbf{#1}}
\begin{document}
\maketitle
\bibliographystyle{plain}
\begin{abstract}
Embedding Calculus, as described by Weiss, is a calculus of functors,
suitable for studying contravariant functors from the poset of open
subsets of a smooth manifold $M$, denoted $\mc{O}(M)$, to a category
of topological spaces (of which the functor $\Emb(-,N)$ for some fixed
manifold $N$ is a prime example).  Polynomial functors of degree $k$
can be characterized by their restriction to $\mc{O}_k(M)$, the full
subposet of $\mc{O}(M)$ consisting of open sets which are a disjoint
union of at most $k$ components, each diffeomorphic to the open unit
ball.  In this work, we replace $\mc{O}_k(M)$ by more general subposets
and see that we still recover the same notion of polynomial
cofunctor.
\end{abstract}

\tableofcontents
\section{Introduction}
In \cite{We}, Weiss develops manifold calculus, a variation
on Goodwillie's calculus of homotopy functors in \cite{Go2}.
Manifold calculus studies contravariant topological space-valued
functors on the poset of open subsets of a manifold $M$.  Manifold
calculus is especially good for studying spaces of smooth embeddings
of one manifold into another by looking at the functor $\Emb(-,N)$ for
a fixed manifold $N$, which is the apparent motivation behind \cite{We}.
The main goal of this work is to generalize Weiss' characterization of
polynomial cofunctors.

Being a calculus of functors, manifold calculus has a notion
of polynomial cofunctor.  These are the cofunctors which
satisfy an appropriate higher-order excision property, similar
to the case of \cite{Go2}.  Weiss is able to characterize degree
$k$ polynomial cofunctors as follows.  Let $\mc{O}$ be the poset
of open sets of a $d$-dimensional manifold $M$, and let $\mc{O}_k$
be the full subposet of $\mc{O}$ whose objects are disjoint unions
of at most $k$ components, each diffeomorphic to $\reals^d$.  Weiss
calls objects of $\mc{O}_k$ special open sets.  Then
a degree $k$ polynomial cofunctor $F:\mc{O}\to\ctgy{Top}$ is
determined (up to equivalence) by its restriction to $\mc{O}_k$.
The main result in this paper (\cref{thm:BkRefinement})
is a statement that generalizes this characterization of a
$k$-polynomial cofunctor by its restriction to a subposet
$\mc{B}_k$ of $\mc{O}_k$.  The objects of $\mc{B}_k$ are simply
disjoint unions of the objects of $\mc{B}_1$.  As long as the
objects of $\mc{B}_1$, form a basis for the topology of $M$,
then no homotopy theoretic information is lost when forming
the polynomial approximation to a cofunctor using $\mc{B}_k$
instead of $\mc{O}_k$  (\cref{thm:PolyUniqueness} and
\cref{thm:FBangPoly}).

We now give a brief outline of this work.  In \cref{sec:Conventions},
we quickly go over some of the conventions and basic notions from
homotopy theory that we will need.  Then, in \cref{sec:MfldCalc} and
\cref{sec:PolyCofunctors}, we briefly introduce Mainfold Calculus,
summarizing some of the main results of Weiss.  We define $\mc{O}_k(M)$,
the special open sets of Weiss mentioned earlier.  We recall Weiss'
construction of polynomial cofunctors as suitable extensions of
cofunctors defined only on $\mc{O}_k(M)$.

In \cref{sec:CatLemmas}, we recall and prove some fairly general
results about functors from any category $\mc{C}$ to $\ctgy{Top}$.
Then, in \cref{sec:SpecialOpenSets}, we generalize the notion of
special open set as mentioned earlier.  Namely, we let $\mc{B}_k(M)$
be full subposets of $\mc{O}_k(M)$ which still contain enough open
sets so that we lose no information when we develop the analogous
theory.  The primary example of interest here occurs when $M$ is a
smooth codimension zero submanifold of $\reals^d$, and $\mc{B}_k(M)$
contains all open sets which are disjoint unions of at most $k$ open
balls (in the euclidean metric sense).

We prove the analogs of several results of Weiss in this more
general setting, adding in a few details as well.  Most proofs go
through with very little change, but one notable exception is
\cref{thm:WeissPolyUniqueness}, where having all of $\mc{O}_k$
as special open sets is crucial to his proof.  Then we prove the
main result (\cref{thm:BkRefinement}) that any valid choice
(as described above) of special open sets yields equivalent
notions of polynomial cofunctors and polynomial approximation
by polynomial cofunctors.

\textbf{Acknowledgements}.  This work comes from part of my doctoral
thesis, which was written at the University of Virginia under
the supervision of Gregory Arone, to whom I am very grateful for
his continued encouragement and advice.

\section{Conventions}\label{sec:Conventions}
In this section, we introduce some conventions and recall some basic
notions from homotopy theory.  We work in the category of weak Hausdorff
compactly generated topological spaces, wich we denote $\ctgy{Top}$.
For topological spaces $X$ and $Y$, we let $\Map(X,Y)$ denote the
space of maps (continuous functions) from $X$ to $Y$, with the
compact-open topology.

If $p:E\to B$ is a map of (unbased) topological spaces, its
mapping path space, $\mps(p)$, is defined to be
the subspace
\begin{equation*}
  \mps(p) = \braces{(e,\vphi) \in E \times \Map(I,B) \mid
  \vphi(0) = p(e)}
\end{equation*}
of $E \times \Map(I,B)$.  Any such map can be factored as
\begin{equation*}
  E \to \mps(p) \to B.
\end{equation*}
The lefthand map sends a point $e\in E$ to the point
$(e,\const_{p(e)})$, where $\const_{p(e)}$ denotes the constant
path at $p(e)$; this map is a homotopy equivalence.  The
righthand map sends a point $(e,\vphi) \in \mps(p)$ to the point
$\vphi(1) \in B$; this map is a fibration.

Let $\mc{C}$ be a small category, and suppose $F:\mc{C}\to\ctgy{Top}$
is a functor.  Along the lines of \cite{BoKa}, we write $\srep(F)$ to
denote the simplicial replacement of $F$, a simplicial space whose
geometric realization gives the homotopy colimit of $F$.  Dually, we
let $\crep(F)$ be the cosimplicial replacement of $F$, a
cosimplicial space whose totalization gives the homotopy limit
of $F$.  In this work, goemetric realization and totalization will
mean homotopy invariant geometric realization and totalization.

\section{Manifold Calculus Preliminaries}\label{sec:MfldCalc}
Let $M$ be a $d$-dimensional smooth manifold without boundary.
Define $\mc{O} = \mc{O}(M)$ to be the poset of open subsets of $M$,
considered as a category.  If $V$ is an open subset of $M$, let
$\mc{O}(V)$ be the full subposet of $\mc{O}$ whose objects are
contained in $V$.  Equivalently, we can think of $\mc{O}(V)$
as the comma category $(\mc{O}(M) \downarrow V)$.

\begin{defn}
A cofunctor $F$ from a subcategory of $\mc{O}$ to $\ctgy{Top}$
will be called an isotopy cofunctor if $F$ takes all
inclusions which are isotopy equivalences to homotopy equivalences.
A cofunctor $F:\mc{O}\to\ctgy{Top}$ will be called good
if it satisfies the following conditions:
\begin{enumabc}
  \item $F$ is an isotopy cofunctor.
  \item If $V_0 \to V_1 \to \cdots$ is a string of inclusions in
  $\mc{O}$, then the natural map
  \begin{equation*}
    F\parens{\bigcup_{i=0}^\infty V_i} \to \holim_i F(V_i)
  \end{equation*}
  is a homotopy equivalence.
\end{enumabc}
\end{defn}

\begin{example}
For a fixed manifold $N$, Proposition 1.4 in \cite{We} shows that
the cofunctors $\Emb(-,N)$ and $\Imm(-,N)$ are good, where
$\Emb$ and $\Imm$ denote the spaces of smooth embeddings and
immersions, respectively.
\end{example}

Let $V$ be an open subset of $M$, and let $C_0,\ldots,C_k$ be
pairwise disjoint closed subsets of $V$.  For $S \subseteq
\braces{0,\ldots,k}$, let
\begin{equation*}
V_S = V \setminus \bigcup_{i\in S} C_i = \bigcap_{i\in S}
  \parens{V \setminus C_i}.
\end{equation*}
We thus have a $(k+1)$-cube of spaces $S \mapsto F(V_S)$.
We say that this cube is homotopy cartesian if the natural map
\begin{equation*}
  F(V) \to \holim_{S\neq\emptyset} F(V_S)
\end{equation*}
is a homotopy equivalence.  For example, if $k = 1$ this becomes
the requirement that
\begin{equation*}
\xymatrix
{
  F(V = V_\emptyset) \ar[r] \ar[d] &
  F(V_{\braces{0}}) \ar[d] \\
  F(V_{\braces{1}}) \ar[r] &
  F(V_{\braces{0,1}}) \\
}
\end{equation*}
is a homotopy pullback square.  See \cite{Go1} for more background
on cubical diagrams of spaces.

\begin{defn}
A good cofunctor $F$ is \textbf{polynomial} of degree $\leq k$
if for any choices of $V$ and $C_0,\ldots,C_k$, the $(k+1)$-cube
$S\mapsto F(V_S)$ is homotopy cartesian.
\end{defn}

\begin{example}
Example 2.3 in \cite{We} shows that $\Imm(-,N)$ is linear (polynomial
of degree $\leq 1$) if $\dim M < \dim N$, or if $\dim M = \dim N$ and
$M$ has no compact components.
\end{example}

\section{Polynomial Cofunctors and the Taylor Tower}\label{sec:PolyCofunctors}
Let $\mc{O}_k$ be the full subposet of $\mc{O}$
consisting of those open sets which are diffeomorphic to
a disjoint union of at most $k$ copies of $\reals^d$.
Weiss calls these special open sets and shows \cite{We} that
$k$-polynomial cofunctors are determined by their restriction
to $\mc{O}_k$.  More precisely, we have the following.

\begin{thm}[\cite{We}, 5.1]\label{thm:WeissPolyUniqueness}
If $F,G:\mc{O}\to\ctgy{Top}$ are polynomial cofunctors of degree
$\leq k$, and $\gamma:F\to G$ is a natural transformation such
that $\gamma_V:F(V)\to G(V)$ is a homotopy equivalence for
all $V \in \mc{O}_k$, then $\gamma_V$ is a homotopy equivalence
for all $V \in \mc{O}$.
\end{thm}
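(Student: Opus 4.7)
The plan is to bootstrap from $\mc{O}_k$ to all of $\mc{O}$ using the two axioms of a good cofunctor together with the $(k+1)$-cube excision property of polynomial cofunctors. First, condition (b) in the definition of a good cofunctor lets us exhaust any open $V \subseteq M$ as an ascending union $V = \bigcup_i V_i$, where each $V_i$ is the interior of a compact codimension-zero smooth submanifold of $M$ with boundary. Since $F$ and $G$ both convert such unions to homotopy limits and $\gamma$ is natural, it suffices to prove the theorem under the assumption that $V$ is itself the interior of such a compact codimension-zero submanifold.

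For such $V$, choose a handle decomposition of its closure and induct on the total number of handles $\ell$. If $\ell \leq k$, then a suitable isotopy collapses the handle attachments so that $V$ becomes isotopy equivalent (as an open subset of $M$) to a disjoint union of at most $k$ open balls, i.e., an element of $\mc{O}_k$; combining the hypothesis with axiom (a) then gives that $\gamma_V$ is a homotopy equivalence.

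For the inductive step with $\ell > k$, select $k+1$ distinct handles and, in each, pick a small closed subset (e.g., a closed neighborhood of the cocore, or just a point for a top-dimensional handle). This produces pairwise disjoint closed subsets $C_0, \ldots, C_k$ of $V$ such that for every non-empty $S \subseteq \{0,\ldots,k\}$, the complement $V_S = V \setminus \bigcup_{i\in S} C_i$ is isotopy equivalent to the interior of a compact codimension-zero submanifold admitting a handle decomposition with strictly fewer than $\ell$ handles. The $k$-polynomiality of both $F$ and $G$ then yields homotopy cartesian $(k+1)$-cubes, so that
\begin{equation*}
F(V) \xrightarrow{\simeq} \holim_{S \neq \emptyset} F(V_S)
\quad\text{and}\quad
G(V) \xrightarrow{\simeq} \holim_{S \neq \emptyset} G(V_S),
\end{equation*}
with $\gamma$ producing a map between these diagrams. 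By the inductive hypothesis, together with axiom (a) to transport equivalences along isotopy equivalences, $\gamma_{V_S}$ is a homotopy equivalence for every non-empty $S$; hence the induced map on homotopy limits is a homotopy equivalence, forcing $\gamma_V$ to be one as well.

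The main obstacle is constructing the $C_i$ so that removing them genuinely reduces the handle count in a controlled way. For a top-dimensional handle, which is attached along all of its boundary, removing an interior point yields a space that deformation retracts onto the exterior. For intermediate-index handles, one thickens the cocore, which effectively cancels the handle up to isotopy. The $0$-handles are what force the base case to be $\ell \leq k$ rather than $\ell = 0$: a disjoint collection of $0$-handles cannot be simplified by removing closed sets, so an input lying in $\mc{O}_k$ must serve as the ground floor of the induction. Arranging the handle-by-handle reduction so that every $V_S$ really does have fewer handles, and the closed subsets remain pairwise disjoint, is the delicate bookkeeping that must be carried out.
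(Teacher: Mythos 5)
The paper does not prove this statement; it quotes it from Weiss, so I am measuring your proposal against Weiss's own argument for his Theorem 5.1. Your overall strategy is exactly his: use axiom (b) to reduce to $V$ the interior of a compact codimension-zero handlebody, then induct on a handle decomposition, using $k$-polynomiality to pass from the faces $V_S$ back to $V$. The gaps are concentrated precisely where you flag the ``delicate bookkeeping,'' and two of them are fatal as written.

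First, the base case is false: a handlebody with $\ell \leq k$ handles is in general \emph{not} isotopy equivalent to a disjoint union of at most $k$ open balls. Already one $0$-handle with one $1$-handle attached gives an open solid torus $S^1 \times \reals^{d-1}$, which lies in no $\mc{O}_k$; no isotopy ``collapses the handle attachments.'' The correct base case is handle index $0$ with at most $k$ handles, i.e.\ literally an element of $\mc{O}_k$. This then forces you to treat a disjoint union of $m > k$ balls in the inductive step, and contrary to your closing remark such a $V$ \emph{can} be simplified by removing closed sets: each component is clopen in $V$, hence closed in $V$, so one takes $C_0,\dots,C_k$ to be $k+1$ of the components themselves, making every $V_S$ with $S \neq \emptyset$ a union of $m - \abs{S} < m$ balls. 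Removing a point from a $0$-handle, by contrast, cancels nothing (its attaching region is empty, and a ball minus a point is $S^{d-1}\times\reals$, which is not a simpler handlebody).

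Second, distributing the $k+1$ closed sets over $k+1$ \emph{distinct} handles does not work. A thickened cocore cancels a handle only if no later handle is attached over it, and when $\ell > k$ there need not exist $k+1$ such free handles (e.g.\ one $0$-handle with $k$ one-handles running over it). Weiss instead inducts lexicographically on the pair (handle index $q$, number of index-$q$ handles) and places all $k+1$ closed sets inside a \emph{single} handle $e(D^q\times D^{d-q})$ of top index $q \geq 1$, as parallel slices $e(D^{q-1}\times[t_i-\epsilon,t_i+\epsilon]\times D^{d-q})$; removing any nonempty subset of these replaces that one $q$-handle by handles of index $< q$, so every face $V_S$ is simultaneously simpler. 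With the corrected base case, the component trick for $q=0$, and the parallel-slice choice for $q\geq 1$, your argument closes up and coincides with Weiss's.
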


Moreover, Weiss shows \cite{We} that any (isotopy) cofunctor 
$\mc{O}_k\to\ctgy{Top}$ has a canonical extension to a $k$-polynomial
cofunctor $\mc{O}\to\ctgy{Top}$.  We introduce some notation
and then give the result.

\begin{notation}
Let $\mc{N}$ be a subposet of $\mc{O}$.  If $F$ is a cofunctor
from $\mc{N}\to\ctgy{Top}$, then define $F^!:\mc{O}\to\ctgy{Top}$
to be the homotopy right Kan extension of $F$ along the inclusion
functor $\mc{N}\to\mc{O}$.  An explicit formula given on objects is:
\begin{equation*}
  F^!(V) = \holim_{U\in\mc{N}(V)} F(U).
\end{equation*}
\end{notation}

\begin{thm}[\cite{We}, 3.8 and 4.1]\label{thm:WeissFBangPoly}
If $F:\mc{O}_k \to \ctgy{Top}$ is an isotopy cofunctor, then
$F^!:\mc{O} \to \ctgy{Top}$ is (good and) polynomial of degree $\leq k$.
\end{thm}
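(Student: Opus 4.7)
The plan is to verify both conclusions of the statement: $F^!$ is good, and $F^!$ is polynomial of degree $\leq k$. Goodness consists of two conditions: the isotopy cofunctor property and compatibility with filtered unions. For the isotopy cofunctor property, given an isotopy equivalence $V \hookrightarrow V'$, I would show that the induced restriction $F^!(V') \to F^!(V)$ is an equivalence by combining a cofinality-type argument on the indexing categories $\mc{O}_k(V) \hookrightarrow \mc{O}_k(V')$ (arising from the ambient isotopy) with the assumed isotopy invariance of $F$ on $\mc{O}_k$. For compatibility with filtered unions, I would exchange the holim defining $F^!$ with a filtered holim via Fubini, noting that $\mc{O}_k(\bigcup V_i)$ is in an appropriate sense the filtered union of the $\mc{O}_k(V_i)$'s (after absorbing a mild isotopy correction for opens in the union not yet contained in any $V_i$).

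The substance of the proof lies in verifying the polynomial condition. Given $V$ and pairwise disjoint closed sets $C_0,\ldots,C_k \subseteq V$, I must show
\[
  F^!(V) \;\longrightarrow\; \holim_{\emptyset \neq S \subseteq \{0,\ldots,k\}} F^!(V_S)
\]
is a homotopy equivalence. Substituting the definition of $F^!$ on both sides and swapping the order of homotopy limits on the right (Fubini for holim), the target becomes a homotopy limit over the Grothendieck construction of the functor $S \mapsto \mc{O}_k(V_S)$, namely the category of pairs $(S, U)$ with $\emptyset \neq S$ and $U \in \mc{O}_k(V)$ satisfying $U \cap C_i = \emptyset$ for every $i \in S$. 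The forgetful projection $(S, U) \mapsto U$ to $\mc{O}_k(V)$ has, as fiber over $U$, the poset of nonempty subsets of $T(U) := \{i : U \cap C_i = \emptyset\}$, whose nerve is contractible whenever $T(U)$ is nonempty. One would then like to invoke Quillen's Theorem A to conclude that the two holims are equivalent.

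The main obstacle is that while the pigeonhole principle might suggest $T(U)$ is always nonempty (since $U$ has at most $k$ components and there are $k+1$ closed sets $C_i$), a single component of $U$ is a ball that can meet many $C_i$'s at once, so $T(U) = \emptyset$ genuinely does occur. The remedy, and the technical heart of the argument, is the isotopy cofunctor property of $F$: any such oversized component can be shrunk via an ambient isotopy to lie near a single $C_i$ (or miss all of them), producing an isotopy-equivalent $U' \subseteq U$ with $T(U') \neq \emptyset$. This lets one replace the indexing category $\mc{O}_k(V)$ by the full subposet of \emph{separated} opens (those with $T(U) \neq \emptyset$) without altering the homotopy limit of $F$; on this subposet, Quillen's Theorem A applies cleanly, each fiber of the projection is contractible, and chasing the resulting equivalence back through the Fubini exchange finishes the proof.
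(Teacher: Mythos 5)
There is a genuine gap, and it sits exactly at the step you flag as ``the technical heart of the argument.'' (Note that the paper itself states this theorem as a citation of Weiss's 3.8 and 4.1 without proof, so the comparison below is to Weiss's argument and to the parallel machinery the paper develops for $\mc{B}_k$ in \cref{sec:SpecialOpenSets}.) Your proposed remedy --- replace $\mc{O}_k(V)$ by the full subposet $\mc{O}'$ of separated opens ``without altering the homotopy limit of $F$'' --- is asserted, not proved, and the shrinking observation cannot prove it. Because $F$ is contravariant, a homotopy limit over a poset of opens is controlled by the \emph{large} elements: restriction to a subposet $Q$ is an equivalence for formal (cofinality) reasons only if, for each $U$, the poset of elements of $Q$ \emph{containing} $U$ is contractible. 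When $U$ meets every $C_i$, no separated open contains $U$, so that comma poset is empty. Your isotopy shrinkings $U'\subseteq U$ point in the opposite direction and give no cofinality statement; and since $F$ sends only isotopy equivalences (not all inclusions) to equivalences, you also cannot convert the holim into a section space over a classifying space and apply Quillen's Theorem A downstairs. The replacement step is therefore essentially equivalent in difficulty to the theorem itself. The same objection applies to your two-sentence treatment of goodness: the inclusion $\mc{O}_k(V)\hookrightarrow\mc{O}_k(V')$ induced by an isotopy equivalence is not homotopy initial for formal reasons, which is precisely why Weiss's Theorem 3.8 occupies most of his Section 3.

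The way the actual argument gets around this is worth internalizing, because you have correctly diagnosed why the naive pigeonhole on components of $U$ fails. One first replaces $F^!$ by $\Tot\parens{p\mapsto F_p^!}$ (the analogue of \cref{thm:TotFpBangToFBang}), where $F_p$ lives on categories of strings whose morphisms are componentwise isotopy equivalences; $F_p$ \emph{does} send every morphism to an equivalence, so \cref{thm:QuillenDwyer} identifies $F_p^!(V)$ with a section space of a fibration over a classifying space that fibers over $\abs{\mc{A}_k(V)}\simeq\coprod_{j\le k}\binom{V}{j}$ (\cref{thm:NerveAk}, \cref{thm:HofiberAkBk}). The pigeonhole is then applied to \emph{configurations} rather than to opens: a configuration of $j\le k$ points meets at most $j$ of the $k+1$ pairwise disjoint closed sets, hence misses at least one, so the subspaces $\coprod_j\binom{V_S}{j}$ for $S\neq\emptyset$ form an open cover of $\coprod_j\binom{V}{j}$ with $\coprod_j\binom{V_S}{j}=\bigcap_{i\in S}\coprod_j\binom{V_{\braces{i}}}{j}$. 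Homotopy descent for section spaces over this cover shows the cube $S\mapsto F_p^!(V_S)$ is cartesian, and applying $\Tot$ finishes. Your Fubini/Grothendieck set-up for the target of the comparison map and your identification of the fibers as posets of nonempty subsets of $T(U)$ are fine, but without the configuration-space and section-space apparatus (or some equivalent, e.g.\ a handle argument) the proof does not close.
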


Combining these two results leads to the notion of the polynomial
approximation of a cofunctor.  Specifically, for $F:\mc{O}(M)\to\ctgy{Top}$
a good cofunctor, Weiss defines the degree $k$ polynomial approximation
to $F$, written $T_k F$, by the formula $T_k F = (F\vert_{\mc{O}_k})^!$.
That is, $T_k F$ is obtained by first restricting $F$ to $\mc{O}_k(M)$,
then by extending it back to all of $\mc{O}(M)$.  Thus, $T_k$ is an
endofunctor on the category of good cofunctors $\mc{O}(M)\to
\ctgy{Top}$.

The polynomial approximations to $F$ fit into a tower:
\begin{equation*}
\xymatrix
{
  & \vdots \ar[d] \\
  & T_2F \ar[d] \\
  & T_1F \ar[d] \\
  F \ar[uur] \ar[ur] \ar[r] & T_0F \\
}
\end{equation*}
called the Taylor tower of $F$.  The vertical maps $T_k F \to
T_{k-1}F$ come from applying $T_k$ to the natural map
$F \to T_{k-1} F$ and then observing that $T_k T_{k-1}$ is naturally
equivalent to $T_{k-1}$.

\section{Categorical Lemmas}\label{sec:CatLemmas}
\begin{defn}
A map $p:E\to B$ is a quasifibration if for all $e\in E$, the
canonical inclusion of the fiber of $p$ over $p(e)$ into the
homotopy fiber of $p$ over $p(e)$ is a weak homotopy equivalence.
\end{defn}

\begin{notation}
If $\mc{C}$ is a category, then we write $\abs{\mc{C}}$ for
the classifying space of $\mc{C}$, that is, for the geometric
realization of the nerve of $\mc{C}$.
\end{notation}

\begin{thm}[Quillen-Dwyer Theorem]\label{thm:QuillenDwyer}
Let $\mc{C}$ be a small category, and let $F:\mc{C}\to\ctgy{Top}$ be
a functor which sends all morphisms to homotopy equivalences.
Then the projection from $\hocolim F \to \abs{\mc{C}}$ is a
quasifibration.  Moreover, if $p':\mps(p)\to\abs{\mc{C}}$ is
the associated fibration, then $\holim F$ is equivalent to
the space of sections of $p'$.
\end{thm}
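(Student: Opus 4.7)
The plan is to model $\hocolim F$ as the geometric realization of the simplicial replacement $\srep(F)$. In simplicial level $n$ this is $\coprod_\sigma F(c_0)$, indexed by chains $\sigma = (c_0 \to c_1 \to \cdots \to c_n)$ in $\mc{C}$. Similarly, $\abs{\mc{C}}$ is the realization of the nerve, indexed by the same chains but without the $F(c_0)$-factor. The projection $p:\hocolim F \to \abs{\mc{C}}$ is then the forgetful map at each simplicial level, and over the open $n$-cell corresponding to $\sigma$ it is literally the projection $\Delta^n \times F(c_0) \to \Delta^n$. The question is what happens at the attaching data.

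To establish that $p$ is a quasifibration, I would filter $\abs{\mc{C}}$ by its skeleta and prove by induction on $n$ that the restriction of $p$ over the $n$-skeleton is a quasifibration. Attaching the cell indexed by $\sigma$ corresponds to a pushout along $\dee\Delta^n \hookrightarrow \Delta^n$; only the face map $d_0$, which drops $c_0$, attaches nontrivially in the $F$-factor, and it glues via the homotopy equivalence $F(c_0 \to c_1)$ supplied by the hypothesis. The other faces and all degeneracies are trivial on the $F$-factor. Dold's classical gluing criterion for quasifibrations then drives the inductive step: if a map restricts to a quasifibration on each piece of a suitable open cover and on each of their intersections, and if the implied transition maps are homotopy equivalences, then it is a quasifibration globally. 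The hypothesis that $F$ sends morphisms to homotopy equivalences is exactly what makes those transitions equivalences.

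For the section-space claim, I would replace $p$ by the Hurewicz fibration $p':\mps(p) \to \abs{\mc{C}}$ as in \cref{sec:Conventions}. By the quasifibration property just proved, the fiber of $p'$ over each vertex $c$ is weakly equivalent to $F(c)$. A section of $p'$ unpacks as a coherent choice of a point in the fiber over each $c$, a path lifting for each morphism, and higher coherences for each composite chain; this matches level-by-level with the cosimplicial replacement $\crep(F)$, and since the space of sections over $\abs{\mc{C}}$ is precisely this totalization, we obtain $\Tot\,\crep(F) = \holim F$.

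The main obstacle is the inductive quasifibration step. Dold's criterion requires one to produce compatible open neighborhoods of each skeleton together with deformation retractions of these neighborhoods onto the skeleton that lift to deformation retractions upstairs inducing homotopy equivalences on fibers. Setting this data up cleanly, and verifying that the hypothesis on $F$ is what promotes the lifted retractions to fiberwise equivalences, is where the real work lies; the second half of the theorem is then a formal consequence once $p$ has been replaced by a genuine fibration.
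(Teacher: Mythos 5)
The paper does not actually prove this statement: it is quoted as a known result, with the quasifibration half attributed to Quillen and the section-space half to Dwyer, so there is no internal proof to measure your argument against. Judged on its own, your outline of the first half is the standard one (skeletal filtration of $\abs{\mc{C}}$, the observation that only the face map dropping $c_0$ acts through $F$, and the Dold--Thom criteria to propagate the quasifibration property across cell attachments), and you are right that the hypothesis on $F$ enters exactly as the requirement that the fiber transport along the retraction of a neighborhood of the $(n-1)$-skeleton be a weak equivalence. You correctly flag that constructing the compatible fiberwise deformation retractions is the real content; as written that step is deferred rather than done, but the strategy is the correct one.

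The second half is where you have a genuine gap. You assert that a section of $p'$ "matches level-by-level with the cosimplicial replacement" and that the section space "is precisely this totalization," and you conclude that the claim is "a formal consequence" of having replaced $p$ by a fibration. It is not: this identification is the substantive content of Dwyer's theorem. What one actually has is a tower of fibrations $\Gamma_n \to \Gamma_{n-1}$ of section spaces over the skeleta of $\abs{\mc{C}}$, and separately the tower $\Tot_n\crep(F) \to \Tot_{n-1}\crep(F)$; one must construct a comparison of these towers and show it is a levelwise equivalence, using the contractibility of $\Delta^n$ and the (already established) identification of the fibers of $p'$ over vertices with the $F(c)$, compatibly with all cofaces and codegeneracies. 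Without that comparison the equality $\Gamma(p') = \Tot\crep(F)$ is unsupported; note also that the statement is simply false for a general $F$, so the hypothesis that $F$ inverts all morphisms must visibly enter this half as well, which your sketch does not make explicit.
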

The quasifibration statement of this result is due to Quillen
in \cite{Qu}, and the identification of the homotopy limit
of $F$ with the space of sections of the associated
fibration is due to Dwyer in \cite{Dw}.

\begin{lemma}\label{thm:RestrictedHolimEquiv}
Let $\mc{D}$ be a small category, and let $\mc{C}$ be a
subcategory of $\mc{D}$.  Let $F:\mc{D}\to\ctgy{Top}$ be a
functor taking all morphisms to homotopy equivalences.  If the
inclusion $\abs{\mc{C}} \hookrightarrow \abs{\mc{D}}$ is a
homotopy equivalence, then so is the map
\begin{equation*}
\holim_\mc{D} F \to \holim_\mc{C} F\vert_\mc{C}.
\end{equation*}
\end{lemma}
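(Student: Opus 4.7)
The plan is to apply \cref{thm:QuillenDwyer} to both $F$ and $F|_\mc{C}$, converting each homotopy limit into the space of sections of a Hurewicz fibration over its respective classifying space, and then to deduce the equivalence from the fact that restricting sections of a fibration along a trivial cofibration of bases is a homotopy equivalence.

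First, applying \cref{thm:QuillenDwyer} to $F$, write $p:\hocolim_\mc{D} F\to|\mc{D}|$ for the natural projection and $p':\mps(p)\to|\mc{D}|$ for its associated fibration, so that $\holim_\mc{D} F\simeq\Gamma(p')$, where $\Gamma$ denotes the space of sections. Applying the same construction to $F|_\mc{C}$ produces a projection $q$ and an associated fibration $q'\to|\mc{C}|$ with $\holim_\mc{C} F|_\mc{C}\simeq\Gamma(q')$. Let $i:|\mc{C}|\hookrightarrow|\mc{D}|$ denote the given inclusion.

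The next step is to identify $q'$ with the pullback fibration $i^*p'$. The inclusion $\mc{C}\subseteq\mc{D}$ induces a commuting square of quasifibrations
\begin{equation*}
\xymatrix{
\hocolim_\mc{C} F|_\mc{C} \ar[r] \ar[d]_q & \hocolim_\mc{D} F \ar[d]^p \\
|\mc{C}| \ar[r]^i & |\mc{D}|
}
\end{equation*}
in which the top map is a weak equivalence on each fiber: by the quasifibration property, both fibers over a point in $|\mc{C}|$ compute (up to equivalence) the value of $F$ on an object indexing a simplex containing that point. Passing to mapping path spaces yields a natural map $q'\to i^*p'$ of Hurewicz fibrations over $|\mc{C}|$ which is a fiberwise weak equivalence, hence a fiberwise homotopy equivalence, inducing $\Gamma(q')\simeq\Gamma(i^*p')$. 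Under these identifications the natural restriction map $\holim_\mc{D} F\to\holim_\mc{C} F|_\mc{C}$ corresponds to the restriction of sections $\Gamma(p')\to\Gamma(i^*p')$.

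Finally, the inclusion $i:|\mc{C}|\hookrightarrow|\mc{D}|$ is a CW-subcomplex inclusion, hence a cofibration, and combined with the hypothesized homotopy equivalence it is a trivial cofibration. The lifting property of the fibration $p'$ against $i$ shows that any section of $i^*p'$ extends to a section of $p'$, with the extension unique up to homotopy rel $|\mc{C}|$; applied to parameterized families of sections this argument shows that restriction $\Gamma(p')\to\Gamma(i^*p')$ is a weak homotopy equivalence. Chasing the composite $\holim_\mc{D} F\simeq\Gamma(p')\simeq\Gamma(i^*p')\simeq\Gamma(q')\simeq\holim_\mc{C} F|_\mc{C}$ completes the proof. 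The step I expect to be most delicate is the naturality check in the third paragraph: verifying that the abstract restriction map on homotopy limits actually matches the concrete restriction-of-sections map, which requires carefully tracking both sides through the cosimplicial replacement $\crep(F)$ of $F$ and Dwyer's identification of $\Tot\crep(F)$ with $\Gamma(p')$ to ensure that restricting a cochain from $\mc{D}$ to $\mc{C}$ corresponds to restricting a section from $|\mc{D}|$ to $|\mc{C}|$.
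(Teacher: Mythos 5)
Your proof is correct and follows essentially the same route as the paper: both apply the Quillen--Dwyer theorem to convert the two homotopy limits into section spaces of the associated fibrations over $\abs{\mc{C}}$ and $\abs{\mc{D}}$, identify the fibers over a $0$-simplex $c$ with $F(c)$, and conclude using the hypothesis that $\abs{\mc{C}}\hookrightarrow\abs{\mc{D}}$ is an equivalence. The only cosmetic difference is that the paper deduces the comparison of section spaces from an equivalence of total spaces (proved by the same fiber-sequence comparison you perform fiberwise), whereas you pull back along the inclusion and invoke the lifting property of the fibration against the trivial cofibration $\abs{\mc{C}}\hookrightarrow\abs{\mc{D}}$ directly.
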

\begin{proof}
Let $p:\hocolim_\mc{C} F\vert_{\mc{C}} \to \abs{\mc{C}}$ and $q:
\hocolim_{\mc{D}} F \to \abs{\mc{D}}$ be the projections.  Consider
the diagram
\begin{equation*}
  \xymatrix
  {
    \hocolim_\mc{C} F\vert_\mc{C} \ar[r]^-\simeq \ar[d] &
      \mps(p) \ar[r]^{p'} \ar[d] &
      \abs{\mc{C}} \ar[d]\\
    \hocolim_\mc{D} F \ar[r]^-\simeq &
      \mps(q) \ar[r]^{q'} &
      \abs{\mc{D}}. \\
  }
\end{equation*}
By \cref{thm:QuillenDwyer}, $p$ and $q$ are quasifibrations, and in this
diagram, we have (functorially) factored these maps through their mapping
path spaces as homotopy equivalences followed by fibrations.  The map of
homotopy limits will be a homotopy equivalence if the associated map of
section spaces, $\Gamma_{q'} \to \Gamma_{p'}$.  This will be the case if
both of the right two vertical maps are homotopy equivalences.  The
righthand one is by hypothesis, and the middle one will be if the
lefthand one is.

Choose a basepoint in $\abs{\mc{C}}$, a 0-simplex corresponding
to some object $c$.  Then the fiber of the map
\begin{equation*}
  p:\hocolim_\mc{C} F\vert_\mc{C} \to \abs{\mc{C}} \simeq \hocolim_\mc{C} *
\end{equation*}
can be taken to be just $F(c)$.  See this by looking at the induced
map of simplicial replacements, then observing that we get the
constant simplicial space $F(c)$ as the fiber.  So now we have a
map of quasifibration sequences
\begin{equation*}
  \xymatrix
  {
    F(c) \ar[r] \ar[d] & \hocolim_\mc{C} F\vert_\mc{C} \ar[r]^-p \ar[d] &
      \abs{\mc{C}} \ar[d] \\
    F(c) \ar[r] & \hocolim_\mc{D} F \ar[r]^-q & \abs{\mc{D}} \\
  }
\end{equation*}
Again, the righthand map is a homotopy equivalence, and the
lefthand map can be assumed to be the identity (again looking at
simplicial replacements).  The choice of basepoint was arbitrary
(arbitrary enough, as we have at least one for each path component of
$\abs{\mc{C}}$), so the middle map is also a homotopy equivalence,
and the lemma is proved.
\end{proof}

\begin{lemma}\label{thm:FilteredHolim}
Let $\mc{C}_0 \hookrightarrow \mc{C}_1 \hookrightarrow \mc{C}_2
\hookrightarrow \cdots$ be an increasing inclusion of small
categories, and call their union $\mc{C}$.  Let $F:\mc{C}\to
\ctgy{Top}$ be a functor and denote its restriction to $\mc{C}_i$
by $F_i$.  Then the natural map
\begin{equation*}
\holim_\mc{C} F \to \holim_i \holim_{\mc{C}_i} F_i,
\end{equation*}
is a homotopy equivalence.
\end{lemma}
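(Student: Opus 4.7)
My plan is to compute both sides via cosimplicial replacements and commute an inverse limit past $\Tot$, exploiting that the relevant transition maps are projections of products.

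First I would unpack the left hand side as $\holim_\mc{C} F = \Tot(\crep F)$, where
\begin{equation*}
  \crep(F)^n = \prod_{c_0 \to c_1 \to \cdots \to c_n} F(c_n),
\end{equation*}
the product running over all composable strings of length $n$ in $\mc{C}$. Because any such string involves only finitely many morphisms and $\mc{C} = \bigcup_i \mc{C}_i$, every string in $\mc{C}$ already lies in $\mc{C}_i$ for some (and hence all sufficiently large) $i$. This gives a canonical identification
\begin{equation*}
  \crep(F)^n \cong \lim_i \crep(F_i)^n,
\end{equation*}
where the transition map $\crep(F_{j})^n \to \crep(F_i)^n$ (for $j \geq i$) is the projection that forgets factors indexed by strings in $\mc{C}_j \setminus \mc{C}_i$. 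These identifications are compatible with cofaces and codegeneracies, yielding an isomorphism of cosimplicial spaces $\crep(F) \cong \lim_i \crep(F_i)$.

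Next I would note that each transition map $\crep(F_{j})^n \to \crep(F_i)^n$ is a projection of a Cartesian product onto a subproduct, and is therefore a (Serre) fibration. The inverse limit of a tower of fibrations of spaces agrees with its homotopy inverse limit, so $\crep(F) \simeq \holim_i \crep(F_i)$ levelwise, where the right-hand $\holim_i$ is computed degreewise in the cosimplicial direction. Applying the homotopy-invariant $\Tot$ (from Section~\ref{sec:Conventions}) preserves this equivalence; and since $\Tot$ is itself a homotopy limit over $\Delta$, it commutes with $\holim_i$ by the Fubini theorem for homotopy limits (both sides compute $\holim$ over the product indexing category). Stringing these together gives
\begin{equation*}
  \holim_\mc{C} F \;=\; \Tot(\crep F) \;\simeq\; \Tot\parensA{\holim_i \crep(F_i)} \;\simeq\; \holim_i \Tot(\crep F_i) \;=\; \holim_i \holim_{\mc{C}_i} F_i,
\end{equation*}
and by construction this composite is induced by the restriction maps $F \mapsto F_i$, so it agrees with the natural map of the statement.

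The main step where one has to be careful is the passage from the isomorphism $\crep(F) \cong \lim_i \crep(F_i)$ to a homotopy-meaningful equivalence with $\holim_i \crep(F_i)$; this is where the observation that each transition map is a product projection (hence a fibration at every cosimplicial level) does the real work. The remaining ingredients --- that $\Tot$ preserves levelwise equivalences and commutes with $\holim_i$ --- are standard consequences of our convention of using the homotopy-invariant totalization and the Bousfield--Kan description of homotopy limits.
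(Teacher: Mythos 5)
Your proof is correct and follows essentially the same route as the paper's: both identify $\crep(F)^n$ with the inverse limit of the $\crep(F_i)^n$, observe that the transition maps are projections onto subproducts (hence fibrations, so the ordinary limit computes the homotopy limit), and commute $\Tot$ with $\holim_i$. The only difference is the direction of presentation (you build up the left-hand side, the paper unwinds the right-hand side), which is immaterial.
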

\begin{proof}
We can view this as a map of totalizations of cosimplicial spaces:
\begin{equation*}
\Tot(\crep F) \to \holim_i \Tot(\crep F_i).
\end{equation*}
But totalization commutes with homotopy limits (since for
example, $\Tot X \simeq \holim_{\mathbf{\Delta}} X$ by \cite{BoKa}).
So this becomes the map
\begin{equation*}
\Tot(\crep F) \to \Tot(\holim_i \crep F_i).
\end{equation*}
On the righthand side, $\crep F_i$ is the diagram
\begin{equation*}
\xymatrix{
\crep(F_0)^0 \ar@2{->}@<3pt>[r] & \crep(F_0)^1 \ar@3{->}@<3pt>[r]
  \ar@<3pt>[l] & \cdots \ar@2{->}@<3pt>[l] \\
\crep(F_1)^0 \ar@2{->}@<3pt>[r] \ar[u] & \crep(F_1)^1
  \ar@3{->}@<3pt>[r] \ar@<3pt>[l] \ar[u] & \cdots \ar@2{->}@<3pt>[l]\\
\cdots \ar[u] & \cdots \ar[u] & \\
}\end{equation*}
So, after interchanging the totalization with the homotopy
limit, we are now taking homotopy limits in the vertical direction
first, then the totalization of the resulting cosimplicial
space second.  But the vertical maps are all fibrations
(projections onto subproducts in fact), so we can just take
ordinary limits.  But note that in codegree $q$, this limit
is just $\crep(F)^q$.
\end{proof}

Let $\mc{C}$ be a small category, and let $F:\mc{C}\to\ctgy{Cat}$
be a functor.  The Grothendieck construction is a new category
$\mc{C}\int F$ (alternatively $F \rtimes \mc{C}$) whose objects are
pairs $(c,x)$ where $c$ is an object of $\mc{C}$, and $x$ is
an object of $F(c)$.  A morphism $(c,x) \to (c',x')$ is a pair
$(f,g)$, where $f:c\to c'$ is a morphism in $\mc{C}$, and
$g:F(f)(x) \to x'$ is a morphism in $F(c')$.  If $(f,g):(c,x)\to
(c',x')$ and $(f',g'):(c',x')\to (c'',x'')$ are morphisms,
the composite $(f',g')\circ (f,g)$ is defined to be
\begin{equation*}
  (f',g') \circ (f,g) = (f'\circ f, g' \circ F(f')(g)):(c,x)\to(c'',x'').
\end{equation*}

\begin{example}
If $\mc{C}$ is a group $G$ (as a category with one object), and $F$
takes the object of $\mc{C}$ to another group $H$ in $\ctgy{Cat}$,
then we can consider $F$ as a group homomorphism $\vphi:G\to\Aut(H)$.
In this case, the Grothendieck construction recovers the usual
semidirect product $H \rtimes_\vphi G$.
\end{example}

Thomason \cite{Th} proves the following.
\begin{thm}[Thomason's homotopy colimit theorem]\label{thm:ThomasonHocolim}
For $\mc{C}$ and $F$ as above, there is a natural homotopy equivalence
\begin{equation*}
  \abs{\mc{C}\int F} \simeq \hocolim_{c\in\mc{C}} \abs{F(c)}
\end{equation*}
\end{thm}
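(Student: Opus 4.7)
The plan is to realize both sides of the equivalence as the geometric realization of a common bisimplicial set, following Thomason's original approach, and to invoke the Bousfield--Friedlander theorem (which identifies the realization of a bisimplicial set with either of its iterated realizations) to conclude.

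First I would define a bisimplicial set $W_{\bullet\bullet}$ by
\begin{equation*}
W_{p,q} = \coprod_{(c_0 \to \cdots \to c_p) \in N_p\mc{C}} N_q\parens{F(c_0)},
\end{equation*}
with horizontal face and degeneracy operators coming from $N\mc{C}$ (where the outer face $d_0$ uses the functor $F(c_0 \to c_1)$ to push the $q$-simplex of $NF(c_0)$ forward into $NF(c_1)$) and vertical operators coming from $NF(c_0)$.

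Realizing in the vertical direction first yields, in each horizontal degree $p$, the coproduct $\coprod_\sigma \abs{F(\sigma_0)}$, which is precisely the Bousfield--Kan simplicial replacement formula for the diagram $c \mapsto \abs{F(c)}$; a subsequent horizontal realization produces $\hocolim_{c \in \mc{C}} \abs{F(c)}$ by definition. Realizing in the horizontal direction first yields, at each vertical level $q$, the classifying space of a set-valued Grothendieck construction on $c \mapsto N_q F(c)$, and I would then verify that the vertical realization of the resulting simplicial space recovers $\abs{\mc{C} \int F}$.

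The main obstacle is this last identification. The $n$-simplices of the diagonal $W_{n,n}$ only carry object-level information in $F(c_0)$, whereas $n$-simplices of $N(\mc{C}\int F)$ record object data at every vertex of the $\mc{C}$-chain together with connecting morphisms in each $F(c_i)$. The natural comparison map is therefore not an isomorphism of simplicial sets, so showing that it induces a homotopy equivalence of realizations requires a Quillen's Theorem A-style fiberwise argument over $N\mc{C}$, using that the fiber over (the simplex corresponding to) $c$ is $NF(c)$. Once this identification is in hand, Bousfield--Friedlander closes the argument.
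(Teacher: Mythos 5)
The paper offers no proof of this statement; it is quoted directly from Thomason \cite{Th}, so there is nothing internal to compare against and your attempt must be judged as a reconstruction of Thomason's argument. Your setup is the correct skeleton of that argument: the bisimplicial set $W_{p,q}=\coprod_{c_0\to\cdots\to c_p} N_q\parens{F(c_0)}$ is precisely the simplicial replacement of $c\mapsto NF(c)$, realizing vertically first gives $\hocolim_c\abs{F(c)}$, and realizing horizontally first at level $q$ does give the nerve of the category of elements of the set-valued diagram $c\mapsto N_qF(c)$. (A small misattribution: the interchange of the two realization directions is the standard realization lemma for bisimplicial sets -- the diagonal is homeomorphic to either iterated realization -- not the Bousfield--Friedlander theorem, which concerns fibration sequences.)

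The genuine gap is the step you flag at the end, and it is not a deferred verification but the entire content of the theorem: the natural map from $\abs{\mathrm{diag}\, W}$ to $\abs{\mc{C}\int F}$ (Thomason's $\eta$, the ``initial vertex'' map) is not an isomorphism, and the ``Quillen's Theorem A-style fiberwise argument over $N\mc{C}$, using that the fiber over $c$ is $NF(c)$'' that you propose does not go through as described. Theorem A requires the relevant comma categories to be contractible, and the fibers $NF(c)$ are not; the genuinely fiberwise alternatives (Quillen's Theorem B, or the quasifibration criterion as in \cref{thm:QuillenDwyer}) require $F$ to send every morphism of $\mc{C}$ to a homotopy equivalence, which is not a hypothesis here. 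A correct argument must exploit the contractibility of the comma categories $c\backslash\mc{C}$ rather than of the fibers: Thomason does this by constructing $\eta$ explicitly and reducing, via free resolutions and the gluing lemma, to representable diagrams where $\mc{C}\int\mc{C}(c,-)\cong c\backslash\mc{C}$ is contractible; the standard textbook alternative interpolates an auxiliary two-sided bar construction between $\mathrm{diag}\, W$ and $N(\mc{C}\int F)$ whose two projections are levelwise weak equivalences for exactly that reason. As written, your outline reduces the theorem to an unproved claim that is equivalent to it.
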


\section{Special Open Sets}\label{sec:SpecialOpenSets}
We are now ready to tackle our main goal.  Namely, we would
like to characterize $k$-polynomial cofunctors by their restriction
to some smaller class of special open sets than all of $\mc{O}_k$.
We can do this provided that we leave enough special open sets in
place.  More precisely, for each $k \geq 0$, let $\mc{B}_k$ be a full
subposet of $\mc{O}_k$ satisfying each of the following conditions:
\begin{enumabc}
\item The objects of $\mc{B}_1$ form a basis for the topology of $M$.
\item The objects of $\mc{B}_k$ are exactly those which are a union
  of at most $k$ pairwise disjoint objects of $\mc{B}_1$.
\end{enumabc}
Note that the second condition implies that each $\mc{B}_k$ contains
the empty set as one of its objects.  Also note that once $\mc{B}_1$
is chosen, the rest of the $\mc{B}_k$ are determined automatically.
Now let $\mc{A}_k$ be the wide subposet (same objects, but possibly fewer
morphisms) of $\mc{B}_k$ with the weaker order where $U \leq V$ if the
inclusion of $U$ into $V$ is an isotopy equivalence.

\begin{example}
One possible choice for $\mc{B}_k$ is $\mc{O}_k$ itself.
This is the case that Weiss considers in \cite{We}, where he uses
the notation $\mc{O}k$ instead.  In this example, our $\mc{A}_k$ is
exactly Weiss' $\mc{I}k$.
\end{example}

\begin{example}
If $M$ is given as a smooth codimension zero submanifold
of $\reals^d$, then we can take $\mc{B}_k$ to be the subsets of
$M$ which are unions of at most $k$ pairwise disjoint open balls
(with respect to the euclidean metric), or cubes, simplices,
or convex $d$-bodies more generally.
\end{example}

First, we can very easily strengthen Weiss' characterization of polynomial
cofunctors by considering their restriction to $\mc{B}_k$ as follows.

\begin{cor}\label{thm:PolyUniqueness}
Let $F_1$ and $F_2$ be good cofunctors from $\mc{O} \to
\ctgy{Top}$, both polynomial of degree $\leq k$.  If $\gamma:F_1
\to F_2$ is a natural map such that $\gamma_V:F_1(V) \to F_2(V)$ is
a homotopy equivalence for all $V \in \mc{B}_k$, then it is a homotopy
equivalence for all $V \in \mc{O}$.
\end{cor}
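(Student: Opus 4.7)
The plan is to reduce to Weiss's theorem (\cref{thm:WeissPolyUniqueness}) and then bridge the gap between $\mc{B}_k$ and $\mc{O}_k$ using isotopy invariance. By \cref{thm:WeissPolyUniqueness} it suffices to check that $\gamma_V$ is a homotopy equivalence for every $V\in\mc{O}_k$. Fix such a $V$ and write $V=B_1\sqcup\cdots\sqcup B_m$ with $m\le k$ and each $B_j$ diffeomorphic to $\reals^d$.

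Using the basis assumption (a) on $\mc{B}_1$, I would pick for each $j$ a nonempty $U_j\in\mc{B}_1$ with $U_j\subset B_j$ (any basic neighborhood of a chosen point of $B_j$ that is contained in $B_j$ will do). Then $U:=U_1\sqcup\cdots\sqcup U_m$ is a disjoint union of at most $k$ objects of $\mc{B}_1$, so by condition (b) it lies in $\mc{B}_k$, and by hypothesis $\gamma_U$ is a homotopy equivalence.

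The remaining task is to transfer this equivalence from $U$ to $V$, which I would do by arguing that the inclusion $U\hookrightarrow V$ is an isotopy equivalence, so that goodness of $F_1$ and $F_2$ makes the vertical arrows in the naturality square
\begin{equation*}
\xymatrix{
F_1(V)\ar[r]^{\gamma_V}\ar[d]_{\simeq} & F_2(V)\ar[d]^{\simeq} \\
F_1(U)\ar[r]^{\gamma_U} & F_2(U)
}
\end{equation*}
homotopy equivalences; two-out-of-three then finishes the proof. The isotopy equivalence is componentwise: for each $j$, $U_j\hookrightarrow B_j$ is an inclusion of open subsets of $M$ both diffeomorphic to $\reals^d$, and one can exhibit an inverse up to isotopy by choosing an orientation-preserving diffeomorphism $B_j\to U_j$ and invoking the fact that the space of orientation-preserving smooth open embeddings $\reals^d\hookrightarrow\reals^d$ is path-connected (Alexander-trick style), so that both composites with the inclusion are isotopic through embeddings to the relevant identity maps.

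The main obstacle is really this last isotopy-equivalence claim; once it is granted as a standard fact of differential topology, the corollary follows formally from Weiss's theorem, the basis hypothesis on $\mc{B}_1$, and the naturality of $\gamma$.
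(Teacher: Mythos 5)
Your proposal is correct and follows essentially the same route as the paper: reduce to \cref{thm:WeissPolyUniqueness} by showing $\gamma_V$ is an equivalence for $V\in\mc{O}_k$, which is done by finding $U\in\mc{B}_k$ whose inclusion into $V$ is an isotopy equivalence and applying goodness plus two-out-of-three to the naturality square. The only difference is that you spell out the existence of $U$ (via the basis hypothesis) and the componentwise isotopy-equivalence claim, which the paper simply asserts.
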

\begin{proof}
Let $V \in \mc{O}_k$.  Then there is a $U \in \mc{B}_k$ such that
the inclusion of $U$ into $V$ is an isotopy equivalence.  We get
a commutative square,
\begin{equation*}
\xymatrix
{
  F_1(V) \ar[r] \ar[d]_{\gamma_V} & F_1(U) \ar[d]^{\gamma_U}\\
  F_2(V) \ar[r] & F_2(U).
}
\end{equation*}
The top and bottom maps are induced by isotopy equivalences, so
are themselves homotopy equivlances since $F_1$ and $F_2$ are
good.  And $\gamma_U$ is a homotopy equivalence by assumption,
so therefore $\gamma_V$ is a homotopy equivlance as well.
Then \cref{thm:WeissPolyUniqueness} implies that $\gamma_V$ is in
fact a homotopy equivalence for all $V \in \mc{O}$.
\end{proof}

\begin{rmk}
The rest of the development of this section mimics that of Weiss,
but in slightly more generality.  Still, this development is logically
independent from \cite{We}.  However, the previous characterization
of polynomial cofunctors appealed directly to Weiss' result.
His proof relied on using all of $\mc{O}_k$ in a critical way;
it does not seem that the argument can be made valid when using
some arbitrary choice for $\mc{B}_k$.
\end{rmk}

\begin{notation}
Let $X$ be a topological space.  Let $C(X,k)$ denote the (ordered)
configuration space of $k$ points in $X$.  That is,
\begin{equation*}
  C(X,k) = \braces{(x_1,\ldots,x_k) \in X^k \mid x_i \neq x_j
  \text{ for } i \neq j},
\end{equation*}
with the subspace topology.  Let $\binom{X}{k}$ denote the unordered
configuration space of $k$ points in $X$.  That is, the quotient of
$C(X,k)$ by the (free) action of the symmetric group $\Sigma_k$ which
permutes coordinates.
\end{notation}

The following result is the analog of Lemma 3.5 in \cite{We}.
This proof is based on that of Weiss, but treats the case for
general $j$ all at once.

\begin{prop}\label{thm:NerveAk}
\begin{equation*}
  \abs{\mc{A}_k(M)} \simeq \coprod_{j=0}^k \binom{M}{j}.
\end{equation*}
\end{prop}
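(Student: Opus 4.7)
My plan is to stratify $\mc{A}_k(M)$ by the number of components of its objects. Any morphism $U \hookrightarrow V$ in $\mc{A}_k$ is an isotopy equivalence, hence a homotopy equivalence, which forces $U$ and $V$ to have the same number of components. Letting $\mc{A}_k^{(j)}$ denote the full subposet of $\mc{A}_k$ on the $j$-component objects, this gives $\mc{A}_k = \coprod_{j=0}^k \mc{A}_k^{(j)}$ and thus $\abs{\mc{A}_k} = \coprod_{j=0}^k \abs{\mc{A}_k^{(j)}}$. The case $j = 0$ is immediate (only the empty set lies in $\mc{A}_k^{(0)}$), matching $\binom{M}{0} \simeq \ast$, so the problem reduces to identifying $\abs{\mc{A}_k^{(j)}} \simeq \binom{M}{j}$ for $j \geq 1$.

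For that identification, I would introduce an auxiliary functor $F \colon \mc{A}_k^{(j)} \to \ctgy{Top}$ sending $U = U_1 \sqcup \cdots \sqcup U_j$ to the open subspace $F(U) \subseteq \binom{M}{j}$ of configurations placing exactly one point in each component of $U$. Since the components are distinguishable subsets of $M$ and each is diffeomorphic to $\reals^d$, we get $F(U) \cong \prod_\ell U_\ell \simeq \ast$. A morphism $U \hookrightarrow V$ induces a bijection of components, so the inclusion $F(U) \hookrightarrow F(V)$ is a homotopy equivalence. By \cref{thm:QuillenDwyer}, the projection $\hocolim F \to \abs{\mc{A}_k^{(j)}}$ is a quasifibration with contractible fibers, hence a weak equivalence.

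It remains to show the natural map $\pi \colon \hocolim F \to \binom{M}{j}$, assembled from the inclusions $F(U) \hookrightarrow \binom{M}{j}$, is also a weak equivalence. The sets $\{F(U)\}$ form an open cover of $\binom{M}{j}$, because condition (a) on $\mc{B}_1$ lets us find pairwise disjoint elements of $\mc{B}_1$ containing any prescribed distinct points. A Quillen Theorem A-style argument (checking fibers via the over-category construction) then reduces the weak-equivalence claim to showing that, for each $\xi \in \binom{M}{j}$, the full subposet $\mc{A}_{k,\xi}^{(j)} \subseteq \mc{A}_k^{(j)}$ consisting of those $U$ with $\xi \in F(U)$ has contractible nerve.

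The main obstacle is this final claim, which I would prove by exhibiting $\mc{A}_{k,\xi}^{(j)}$ as cofiltered. Given finitely many $U^{(i)} \in \mc{A}_{k,\xi}^{(j)}$ with components $U^{(i)}_\ell \ni x_\ell$, the basis property of $\mc{B}_1$ lets us pick $W_\ell \in \mc{B}_1$ with $x_\ell \in W_\ell \subseteq \bigcap_i U^{(i)}_\ell$, and then $W := \bigsqcup_\ell W_\ell$ sits below every $U^{(i)}$ in the underlying poset $\mc{B}_k$. The delicate point is arranging each inclusion $W_\ell \hookrightarrow U^{(i)}_\ell$ to be an isotopy equivalence so that $W \to U^{(i)}$ is a morphism in $\mc{A}_k^{(j)}$; this requires a shrinking argument using that any sufficiently small ball neighborhood of $x_\ell$ inside $U^{(i)}_\ell \cong \reals^d$ is isotopy-equivalent to $U^{(i)}_\ell$, so that picking the $W_\ell$ small enough (possible by the basis property) secures the needed isotopy equivalences.
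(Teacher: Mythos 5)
Your argument is correct, and while it shares the paper's geometric skeleton it runs on different machinery. Both proofs start from the decomposition $\mc{A}_k = \coprod_j \mc{A}^{(j)}$ by number of components, and both ultimately rest on the same two facts: the space of configurations with exactly one point in each component of $U$ is contractible (being $\prod_\ell U_\ell \cong \reals^{jd}$), and the poset of special open sets surrounding a fixed configuration $\xi$ is codirected, hence has contractible nerve, by the basis hypothesis on $\mc{B}_1$. The paper packages this by forming an explicit correspondence subspace $W \subseteq \abs{\mc{A}^{(j)}} \times \binom{M}{j}$, applying Segal's theorem on almost locally trivial maps to both projections, and then spending most of its effort on a barycentric, simplex-by-simplex deformation retraction of the fiber $W_y$ onto the nerve of the codirected poset. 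You instead realize the correspondence as $\hocolim F$, dispose of one projection via \cref{thm:QuillenDwyer} (a quasifibration with contractible fibers is a weak equivalence), and dispose of the other via a descent statement for open covers indexed by a poset. This avoids the fiddly retraction, at the cost of invoking the cover-descent lemma, which is not literally Quillen's Theorem A (the target $\binom{M}{j}$ is not a nerve) and should be given a precise citation, e.g.\ to Segal or to Dugger--Isaksen's hypercover descent; the statement you need is that if each stalk poset $\braces{U : \xi \in F(U)}$ has contractible nerve then $\hocolim F \to \binom{M}{j}$ is a weak equivalence.

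Two smaller points you flag or use implicitly are fine but worth making explicit. The disjoint-union decomposition of $\mc{A}_k$ uses that an isotopy equivalence induces a bijection on components (in particular $\emptyset \to V$ is never a morphism for $V \neq \emptyset$). And your "delicate point" at the end is exactly the standard disc-theorem fact that \emph{any} inclusion of one open subset diffeomorphic to $\reals^d$ into another such subset of the same component is automatically an isotopy equivalence (shrink a parametrization $\psi$ to its derivative via $\psi(tx)/t$); note the paper glosses over the identical issue when it asserts codirectedness of its poset $W_y'$ directly from the basis property, so your treatment is, if anything, the more careful of the two.
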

\begin{proof}
First, note that $\mc{A}_k$ is a disjoint union
\begin{equation*}
\mc{A}_k = \coprod_{j=0}^k \mc{A}^{(j)},
\end{equation*}
where $\mc{A}^{(j)}$ is the full subposet of $\mc{A}_j$ consisting
of the objects with exactly $j$ components.  Thus it suffices
to show that $\abs{\mc{A}^{(k)}} \simeq \binom{M}{k}$.  For $k=0$,
this is trivial; now let $k$ be a positive integer.

Define the space $W$ to be the subspace of $\abs{\mc{A}^{(k)}}
\times \binom{M}{k}$ consisting of all points $(x,y)$ where $x$ is
in the interior of a nondegenerate $r$-simplex corresponding to
the string $V_0 \to \cdots \to V_r$, and each component of $V_r$
contains exactly one point of $y$.  We claim $W$ is an open set.
To see this, fix a point $(x,y)$ as above, and let $A$ be the subset 
of $\abs{\mc{A}^{(k)}}$ consisting of all interiors of simplices which
correspond to nondegenerate strings $U_0 \to \cdots \to U_s$ which
contain $V_0 \to \cdots \to V_r$ as a substring.  Note that $A$ is
an open set since it is a union of open cells, and whenever
an $s$-cell is in $A$, then every $(s+1)$-cell having it as a face
is also in $A$.  And now $(x,y) \in A \times V_r \subseteq W$,
so $W$ is open.

Since $W$ is open, it follows that the projections from $W$ to
each factor are almost locally trivial in the sense of \cite{Se}.
By a theorem of Segal \cite{Se}, if each fiber of
an almost locally trivial map is contractible, then the map
is a homotopy equivalence.  So if we can show each fiber of each
projection is contractible, then we have proved the lemma.

First, the fiber over any point of $\abs{\mc{A}^{(k)}}$ will be
diffeomorphic to $\reals^{kd}$.  Second, fix a $y \in
\binom{M}{k}$, and let $W_y$ be the fiber of the projection at $y$,
considered as a subspace of $\abs{\mc{A}^{(k)}}$.  Thus $W_y$ is the
union of the interiors of simplices corresponding to nondegenerate
strings $V_0 \to \cdots \to V_r$, where each component of $V_r$ contains
exactly one point from $y$.  Consider the subspace $W_y'$ of $W_y$ to be
the union of the interiors of only those simplices for which each
component of $V_0$ contains exactly one point from $y$.

Note that $W_y'$ is homeomorphic to the classifying space of
the full subposet of $\mc{A}^{(k)}$ consisting of all objects which
contain exactly one point of $y$ in each component.  Since
the objects of $\mc{A}_1$ form a basis for the topology of $M$,
this subposet is codirected, and hence its classifying space is
contractible by \cite{Qu}.  Thus $W_y' \simeq *$, and we finish the
proof of the lemma by showing that $W_y'$ is a deformation retract
of $W_y$.

Consider a nondegenerate string $V_0 \to \cdots \to V_r$
with each component of $V_r$ containing one point of $y$.
Let $q$ be the smallest index such that each component of
$V_q$ contains one point of $y$.  Let $\Delta$ be the
$r$-simplex in $\abs{\mc{A}^{(k)}}$ corresponding to
$V_0 \to \cdots \to V_r$.  Note that a point $x \in \Delta$ with
barycentric coordinates $(x_0,\ldots,x_r)$ is in $W_y$ iff
$x_q,\ldots,x_r$ are not all 0.  For each such $x \in \Delta \cap
W_y$, let $\conj{x}$ be the point in $\Delta \cap W_y'$ with coordinates
\begin{equation*}
  \frac{(0,\ldots,0,x_q,\ldots,x_r)}{x_q + \cdots + x_r}.
\end{equation*}
We now define a homotopy $H:W_y \times I \to W_y$ piecewise on (the
appropriate part of) each simplex $\Delta$ by the formula $H(x,t) =
(1-t)x + t\conj{x}$.

This is tentatively a deformation retraction of $W_y$ onto $W_y'$,
but we must still verify that $H$ is well-defined; the given formula
must agree on the intersection of simplices in $W_y$.
So, suppose $x\in\Delta$ has coordinates $(x_0,\ldots,x_r)$ and
corresponds to the string $V_0\to\cdots\to V_r$, and $x'\in\Delta'$
has coordinates $(x_0',\ldots,x'_{r'})$ and corresponds to the
string $V_0'\to\cdots\to V_{r'}'$.  Suppose further that $x$
and $x'$ represent the same point in $W_y$.  That is, their
corresponding strings share a (necessarily nonempty) maximal substring
\begin{equation*}
  V_{i_0} \to \cdots \to V_{i_s} = V'_{i'_0} \to \cdots \to V'_{i'_s}.
\end{equation*}
Furthermore, the only nonzero entries of $(x_0,\ldots,x_r)$ are
$x_{i_0},\ldots,x_{i_s}$, the only nonzero entries of $(x'_0,\ldots,
x'_{r'})$ are $x'_{i'_0},\ldots,x'_{i'_s}$, and $x_{i_j} = x_{i'_j}$
for all $0\leq j\leq s$.  Note that when defining the formula for
$\conj{x}$, we could have equivalently chosen $q$ to be the smallest
index for which $y\in V_q$ \emph{and} $x_q \neq 0$.  With this in
mind, we see that $\conj{x}$ and $\conj{x}'$ represent the same point,
and so we are done.
\end{proof}

\begin{notation}
For $p \geq 0$, let $\mc{A}_k(\mc{B}_k)_p = \mc{A}_k(\mc{B}_k)_p(M)$
be the category whose objects are strings of $p$ composable
morphisms in $\mc{B}_k$, $V_0 \to \cdots \to V_p$, and whose
morphisms are natural transformations of such diagrams whose
component maps all lie in $\mc{A}_k$.
\end{notation}

The next result is the analog of 3.6 and 3.7 in \cite{We}.

\begin{lemma}\label{thm:HofiberAkBk}
The homotopy fiber (over some 0-simplex $W$) of the map
\begin{equation*}
  \abs{\mc{A}_k(\mc{B}_k)_p(M)} \to \abs{\mc{A}_k(M)}
\end{equation*}
induced by the functor sending $(V_0 \to \cdots \to V_p)
\mapsto V_p$ is $\abs{\mc{A}_k(\mc{B}_k)_{p-1}(W)}$.
Furthermore, the functor $\abs{\mc{A}_k(\mc{B}_k)_p(-)}:\mc{O}(M)
\to\ctgy{Top}$ takes isotopy equivalences to homotopy equivalences.
\end{lemma}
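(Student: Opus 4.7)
The plan is to prove both statements simultaneously by induction on $p$. The key idea is to realize $\mc{A}_k(\mc{B}_k)_p(M)$ as a Grothendieck construction over $\mc{A}_k(M)$, so that the projection $(V_0 \to \cdots \to V_p) \mapsto V_p$ becomes the structure map of a quasifibration via Thomason's theorem and the Quillen--Dwyer theorem, at which point the homotopy fiber can be read off and the isotopy invariance extracted by a standard ladder argument.

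For the base case $p = 0$ we have $\mc{A}_k(\mc{B}_k)_0 = \mc{A}_k$, so the homotopy fiber claim is vacuous, and the isotopy invariance of $\abs{\mc{A}_k(-)}$ follows from \cref{thm:NerveAk} together with the fact that an isotopy equivalence $V \hookrightarrow V'$ (being part of an ambient diffeotopy of $V'$) induces a homotopy equivalence $\binom{V}{j} \to \binom{V'}{j}$ for each $j$.

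For the inductive step, define $G: \mc{A}_k(M) \to \ctgy{Cat}$ by $G(W) = \mc{A}_k(\mc{B}_k)_{p-1}(W)$, with $W \subseteq W'$ inducing the obvious inclusion of subcategories. A direct inspection of objects and morphisms identifies $\mc{A}_k(\mc{B}_k)_p(M)$ with the Grothendieck construction $\mc{A}_k(M)\int G$ via the correspondence sending $(W, V_0 \to \cdots \to V_{p-1})$ to the string $V_0 \to \cdots \to V_{p-1} \to W$. Thomason's theorem (\cref{thm:ThomasonHocolim}) then produces
\begin{equation*}
\abs{\mc{A}_k(\mc{B}_k)_p(M)} \simeq \hocolim_{W \in \mc{A}_k(M)} \abs{\mc{A}_k(\mc{B}_k)_{p-1}(W)},
\end{equation*}
compatibly with the two descriptions of the projection to $\abs{\mc{A}_k(M)}$. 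By the inductive hypothesis at level $p-1$, the functor $W \mapsto \abs{\mc{A}_k(\mc{B}_k)_{p-1}(W)}$ sends every morphism of $\mc{A}_k(M)$ (all of which are isotopy equivalences in $\mc{O}$) to a homotopy equivalence, so \cref{thm:QuillenDwyer} makes the projection a quasifibration whose fiber over a $0$-simplex $W_0$ is $\abs{\mc{A}_k(\mc{B}_k)_{p-1}(W_0)}$, proving the first assertion at level $p$.

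Part two at level $p$ then follows from a ladder argument. Given an isotopy equivalence $V \hookrightarrow V'$, pick a basepoint $W_j \in \mc{B}_k(V) \subseteq \mc{B}_k(V')$ with $j$ components in each component $\binom{V}{j}$ of $\abs{\mc{A}_k(V)}$; then the induced map on fibers over $W_j$ is the identity on $\abs{\mc{A}_k(\mc{B}_k)_{p-1}(W_j)}$, while the map of bases $\abs{\mc{A}_k(V)} \to \abs{\mc{A}_k(V')}$ is a componentwise homotopy equivalence by the $p=0$ case. The long exact sequence of a quasifibration together with the five lemma, applied one component at a time, then yields the desired equivalence on total spaces. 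The main obstacle will be the careful bookkeeping in the Grothendieck construction identification---matching natural transformations of strings of length $p$ with Grothendieck morphisms over length-$(p-1)$ strings---and checking that the Thomason equivalence really does intertwine the abstract projection with the geometric one $V_p \mapsto V_p$, as well as handling the typical disconnectedness of $\abs{\mc{A}_k(V)}$ when invoking the five lemma.
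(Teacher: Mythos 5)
Your proposal is correct and follows essentially the same route as the paper: induction on $p$, the identification of $\mc{A}_k(\mc{B}_k)_p(M)$ with the Grothendieck construction $\mc{A}_k(M)\int \mc{A}_k(\mc{B}_k)_{p-1}(-)$, Thomason's theorem plus Quillen--Dwyer to exhibit the projection as a quasifibration with the stated fiber, and a map of quasifibration sequences for the isotopy-invariance statement. Your explicit attention to the disconnectedness of $\abs{\mc{A}_k(V)}$ when applying the five lemma is a point the paper glosses over, but it is a refinement rather than a different argument.
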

\begin{proof}
The proof is by induction on $p$.  First consider the case
$p = 0$.  Note that $\abs{\mc{A}_k(\mc{B}_k)_0(U)}$ is just
$\abs{\mc{A}_k(U)}$, which by \cref{thm:NerveAk} is homotopy
equivalent to $\coprod_0^k \binom{U}{j}$.  Thus if $U\to U'$ is an
isotopy equivalence in $\mc{O}(M)$, then the induced map $\binom{U}{j}
\to \binom{U'}{j}$ is a homotopy equivalence for each $j$.

Now for $p > 0$, the Grothendieck construction gives us an
isomorphism of categories
\begin{align*}
\mc{A}_k(\mc{B}_k)_p(M) &\cong \mc{A}_k(M) \int
  \mc{A}_k(\mc{B}_k)_{p-1}(-)\\
V_0 \to \cdots \to V_p &\leftrightarrow (V_p, V_0 \to \cdots \to
  V_{p-1}).
\end{align*}
Combining this with Thomason's homotopy colimit theorem (\cref
{thm:ThomasonHocolim}), we get a homotopy equivalence
\begin{equation*}
\hocolim_{V\in\mc{A}_k(M)} \abs{\mc{A}_k(\mc{B}_k)_{p-1}(V)} \simeq
  \abs{\mc{A}_k(\mc{B}_k)_p(M)}.
\end{equation*}

The map in question then corresponds to the usual projection
of the homotopy colimit (which we take to be the usual
representation as the realization of the appropriate simplicial
replacement) to the nerve of the indexing category.
By the induction hypothesis and \cref{thm:QuillenDwyer}, this map
is a quasifibration, so the homotopy fiber we are interested in
has the same homotopy type as the actual fiber of this map.  By
the proof of \cref{thm:RestrictedHolimEquiv}, this fiber can be taken
to be $\abs{\mc{A}_k(\mc{B}_k)_{p-1}(W)}$, as we needed to show.

For the second part of the lemma, let $V \to V'$ be an isotopy
equivalence in $\mc{O}(M)$.  We have a map of quasifibration sequences
\begin{equation*}
\xymatrix{
\abs{\mc{A}_k(\mc{B}_k)_{p-1}(W)} \ar[r] \ar[d] &
  \abs{\mc{A}_k(\mc{B}_k)_p(V)} \ar[r] \ar[d] &
  \abs{\mc{A}_k(V)} \ar[d] \\
\abs{\mc{A}_k(\mc{B}_k)_{p-1}(W)} \ar[r] &
  \abs{\mc{A}_k(\mc{B}_k)_p(V')} \ar[r] &
  \abs{\mc{A}_k(V')} \\
}\end{equation*}
The lefthand vertical map can be taken to be the identity,
and the righthand vertical map is a homotopy equivalence as
mentioned above.  Therefore, so is the middle vertical map.
\end{proof}

Let $F:\mc{B}_k\to\ctgy{Top}$ be an isotopy cofunctor.
Define $F_p:\mc{A}_k(\mc{B}_k)_p\to\ctgy{Top}$ by
$F_p(U_0 \to \cdots \to U_p) = F_p(U_0)$.  Define $F_p^!:\mc{O}\to
\ctgy{Top}$ as:
\begin{equation*}
F_p^!(V) = \holim_{\mc{A}_k(\mc{B}_k)_p(V)} F_p.
\end{equation*}

\begin{lemma}\label{thm:FpBangGood}
$F_p^!$ is a good cofunctor.
\end{lemma}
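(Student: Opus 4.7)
The plan is to verify the two goodness conditions for $F_p^!$ separately, using the machinery of Section~\ref{sec:CatLemmas}. A preliminary observation is needed: morphisms in $\mc{A}_k(\mc{B}_k)_p$ are natural transformations whose component maps lie in $\mc{A}_k$ and are therefore isotopy equivalences in $\mc{O}$. Since $F_p(V_0 \to \cdots \to V_p) = F(V_0)$ depends only on the first term of the string and $F$ is an isotopy cofunctor, $F_p$ sends every morphism of $\mc{A}_k(\mc{B}_k)_p$ (or any subcategory thereof) to a homotopy equivalence, placing us in the setup of \cref{thm:RestrictedHolimEquiv}.

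For the isotopy condition, let $V \hookrightarrow V'$ be an isotopy equivalence in $\mc{O}$. The induced inclusion of subcategories $\mc{A}_k(\mc{B}_k)_p(V) \hookrightarrow \mc{A}_k(\mc{B}_k)_p(V')$ induces a homotopy equivalence on classifying spaces by the second part of \cref{thm:HofiberAkBk}, so \cref{thm:RestrictedHolimEquiv} yields directly that $F_p^!(V') \to F_p^!(V)$ is a homotopy equivalence. For the increasing-union condition, let $V_0 \subseteq V_1 \subseteq \cdots$ have union $V$, set $\mc{C}_i = \mc{A}_k(\mc{B}_k)_p(V_i)$, and let $\mc{C} = \bigcup_i \mc{C}_i \subseteq \mc{A}_k(\mc{B}_k)_p(V)$. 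Applying \cref{thm:FilteredHolim} to $F_p|_\mc{C}$ gives $\holim_{\mc{C}} F_p \simeq \holim_i \holim_{\mc{C}_i} F_p = \holim_i F_p^!(V_i)$, so the canonical map $F_p^!(V) \to \holim_i F_p^!(V_i)$ factors through $\holim_{\mc{C}} F_p$. By \cref{thm:RestrictedHolimEquiv}, it then suffices to show the nerve inclusion $\abs{\mc{C}} \hookrightarrow \abs{\mc{A}_k(\mc{B}_k)_p(V)}$ is a homotopy equivalence.

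The main obstacle is this last nerve equivalence, since $\mc{C}$ can be a proper subcategory: an object $U_0 \subseteq \cdots \subseteq U_p$ in $\mc{A}_k(\mc{B}_k)_p(V)$ need not have $U_p$ contained in any single $V_i$ (the components of $U_p$ are diffeomorphic to $\reals^d$ and could be unbounded). The plan is to handle it by induction on $p$, leveraging the quasifibration structure of \cref{thm:HofiberAkBk}. In the base case $p = 0$, \cref{thm:NerveAk} identifies both classifying spaces with disjoint unions of unordered configuration spaces, reducing the claim to the open-ascending-union equivalence $\bigcup_i \binom{V_i}{j} \simeq \binom{V}{j}$ for each $j \leq k$; this holds because any $j$-point configuration in $V$ already lies in some $V_i$, so the inclusion is an isomorphism. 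For the inductive step, the Grothendieck/Thomason decomposition from the proof of \cref{thm:HofiberAkBk} expresses both $\abs{\mc{A}_k(\mc{B}_k)_p(V)}$ and $\abs{\mc{C}}$ as homotopy colimits of $\abs{\mc{A}_k(\mc{B}_k)_{p-1}(-)}$ over $\abs{\mc{A}_k(V)}$ and $\bigcup_i \abs{\mc{A}_k(V_i)}$ respectively; the inductive hypothesis provides the fiberwise equivalence and the $p=0$ case provides the equivalence on the base, so the total map is a homotopy equivalence.
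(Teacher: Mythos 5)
Your proposal is correct and follows essentially the same route as the paper: part (a) via \cref{thm:RestrictedHolimEquiv} together with \cref{thm:HofiberAkBk}, and part (b) by factoring through $\holim$ over the union category, applying \cref{thm:FilteredHolim}, and then proving the nerve inclusion is an equivalence by induction on $p$ using the configuration-space identification for $p=0$ and the fibration-sequence comparison for $p>0$. The only cosmetic difference is that you invoke the inductive hypothesis for the fiberwise comparison, whereas the fiber over a fixed basepoint $W$ is literally the same space in both rows, so that map can be taken to be the identity.
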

\begin{proof}
For part (a) of goodness, let $V\to V'$ be an isotopy equivalence
in $\mc{O}$.  Since $F_p$ takes all morphisms to homotopy
equivalences, then by \cref{thm:RestrictedHolimEquiv} it suffices to
show that the inclusion $\abs{\mc{A}_k(\mc{B}_k)_p(V)}
\hookrightarrow \abs{\mc{A}_k(\mc{B}_k)_p(V')}$ is a homotopy
equivalence, and it is by \cref{thm:HofiberAkBk}.

For part (b) of goodness, let $V_0 \to V_1 \to \cdots$ be a string
in $\mc{O}$.  We need to show that the map
\begin{equation*}
F_p^!\parens{\bigcup_i V_i} \to \holim_i F_p^!(V_i)
\end{equation*}
is a homotopy equivalence.  Rewrite and factor this as
\begin{equation*}
\holim_{\mc{A}_k(\mc{B}_k)_p(\bigcup V_i)} F_p \to
  \holim_{\bigcup \mc{A}_k(\mc{B}_k)_p(V_i)} F_p \to
  \holim_i \holim_{\mc{A}_k(\mc{B}_k)_p(V_i)} F_p.
\end{equation*}
The right map is a homotopy equivalence by \cref{thm:FilteredHolim}.
Furthermore, if we can show that the inclusion
\begin{equation*}
  \abs{\bigcup_i \mc{A}_k(\mc{B}_k)_p(V_i)} \hookrightarrow
  \abs{\mc{A}_k(\mc{B}_k)_p\parens{\bigcup_i V_i}}
\end{equation*}
is a homotopy equivalence, then \cref{thm:RestrictedHolimEquiv}
would imply that the left map is a homotopy equivalence as well.

Note that the nerve of a union is the same as the union of
nerves.  First, if $p = 0$, we have the inclusion
\begin{equation*}
  \bigcup_i\abs{\mc{A}_k(V_i)} \hookrightarrow
  \abs{\mc{A}_k\parens{\bigcup_i V_i}},
\end{equation*}
which by \cref{thm:NerveAk} is the same as the map
\begin{equation*}
  \bigcup_i \coprod_{j=0}^k \binom{V_i}{j} \hookrightarrow
  \coprod_{j=0}^k \binom{\bigcup_i V_i}{j}.
\end{equation*}
This is actually a homeomorphism since the union and
coproduct commute, and since the configuration spaces in
question are selecting only a finite number of points.
For the $p > 0$ case, consider the inclusion of homotopy
fibration sequences over some base point $W$ in some
$\mc{A}_k(V_i)$.
\begin{equation*}
\xymatrix{
\abs{\mc{A}_k(\mc{B}_k)_{p-1}(W)} \ar[r] \ar@2{-}[d] &
  \bigcup_i \abs{\mc{A}_k(\mc{B}_k)_p(V_i)} \ar[r] \ar[d] &
  \bigcup_i \abs{\mc{A}_k(V_i)} \ar[d] \\
\abs{\mc{A}_k(\mc{B}_k)_{p-1}(W)} \ar[r] &
  \abs{\mc{A}_k(\mc{B}_k)_p\parens{\bigcup_i V_i}} \ar[r] &
  \abs{\mc{A}_k\parens{\bigcup_i V_i}} \\
}\end{equation*}
The righthand map is a homotopy equivalence by the $p = 0$
case above, so the middle map is as well.
\end{proof}

\begin{lemma}\label{thm:TotFpBangToFBang}
For any $V \in \mc{O}$, the projection
\begin{equation*}
  \Tot(p\mapsto F_p^!(V)) \to F^!(V)
\end{equation*}
is a homotopy equivalence.
\end{lemma}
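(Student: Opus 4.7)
The plan is to identify $\Tot(p\mapsto F_p^!(V))$ as a double totalization of a bi-cosimplicial space and reduce one of the totalization directions to recover $F^!(V) = \holim_{\mc{B}_k(V)} F$ by a cofinality argument.

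First I would expand each $F_p^!(V)$ via the Bousfield-Kan cosimplicial replacement:
\begin{equation*}
  F_p^!(V) = \holim_{\mc{A}_k(\mc{B}_k)_p(V)} F_p \simeq \Tot_q \crep(F_p)^q,
\end{equation*}
where $\crep(F_p)^q = \prod_\tau F(V_0^{(0)}(\tau))$, the product ranging over $q$-chains $\tau$ in $\mc{A}_k(\mc{B}_k)_p(V)$.  A $q$-chain in $\mc{A}_k(\mc{B}_k)_p(V)$ is the same as a commutative diagram $[p]\times[q]\to\mc{B}_k(V)$ whose horizontal maps (along $[p]$) are inclusions in $\mc{B}_k$ and whose vertical maps (along $[q]$) are isotopy equivalences in $\mc{A}_k$.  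By symmetry of rows and columns, such a grid is equivalently a $p$-chain in the ``dual'' category $\mc{B}_k(\mc{A}_k)_q(V)$ whose objects are $q$-chains in $\mc{A}_k(V)$ and whose morphisms are natural transformations with components in $\mc{B}_k$.  Setting $Z^{p,q} := \crep(F_p)^q$ and defining $G^q:\mc{B}_k(\mc{A}_k)_q(V)\to\ctgy{Top}$ by $G^q(V_0\to\cdots\to V_q) = F(V_0)$, we then have $Z^{p,q}\cong\crep(G^q)^p$, making $Z^{\bullet,\bullet}$ a bi-cosimplicial space.

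Next I would invoke Fubini for totalization, so that
\begin{equation*}
  \Tot(p\mapsto F_p^!(V)) = \Tot_p\Tot_q Z^{p,q} \simeq \Tot_q\Tot_p Z^{p,q},
\end{equation*}
and for each fixed $q$ use $\Tot_p Z^{p,q} \simeq \holim_{\mc{B}_k(\mc{A}_k)_q(V)} G^q$.  Since $G^q = \pi_q^* F$ for the projection $\pi_q:\mc{B}_k(\mc{A}_k)_q(V)\to\mc{B}_k(V)$ sending $V_0\to\cdots\to V_q$ to $V_0$, the next step is to establish $\holim_{\mc{B}_k(\mc{A}_k)_q(V)} G^q \simeq F^!(V)$ by cofinality.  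The constant-chain section $s_q:\mc{B}_k(V)\to\mc{B}_k(\mc{A}_k)_q(V)$, $s_q(U) = (U\to\cdots\to U)$, satisfies $\pi_q\circ s_q = \id$, and the composition $s_q\circ\pi_q$ is connected to the identity by the natural transformation with components $V_0\hookrightarrow V_i$ (these are $\mc{B}_k$-morphisms, so valid in $\mc{B}_k(\mc{A}_k)_q(V)$).  Equivalently, the comma category $U\downarrow\pi_q$ has initial object $(s_q(U),\id_U)$ and hence contractible nerve.  Either viewpoint shows that $\pi_q^*$ and $s_q^*$ are mutually inverse homotopy equivalences.

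Finally, the sections $s_\bullet$ assemble into a simplicial map $\const\mc{B}_k(V)\to\mc{B}_k(\mc{A}_k)_\bullet(V)$ (face maps collapse repeated entries of a constant chain, degeneracies insert them), which induces a cosimplicial map $\Tot_p Z^{p,\bullet}\to\const F^!(V)$ that is a levelwise homotopy equivalence.  Taking $\Tot_q$ yields
\begin{equation*}
  \Tot_q\Tot_p Z^{p,q} \simeq \Tot_q(\const F^!(V)) \simeq F^!(V),
\end{equation*}
and combining with Fubini identifies $\Tot(p\mapsto F_p^!(V))$ with $F^!(V)$ via the natural projection of the lemma.  The main obstacle I expect is bookkeeping the bi-cosimplicial structure: verifying that the two identifications $Z^{p,q}\cong\crep(F_p)^q$ and $Z^{p,q}\cong\crep(G^q)^p$ induce the same bi-cosimplicial structure (so Fubini applies as stated) and that the cofinality equivalences respect the cosimplicial direction in $q$.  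Both reduce to the fact that $s_\bullet$ is a simplicial map, together with the standard naturality of the Bousfield-Kan construction.
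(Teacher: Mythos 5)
Your proposal is correct and follows essentially the same route as the paper: interpret $\Tot(p\mapsto F_p^!(V))$ as a double totalization, interchange the two cosimplicial directions to land on homotopy limits over the transposed category of $q$-chains in $\mc{A}_k$ with morphisms in $\mc{B}_k$, and then use cofinality of the constant-chain functor $U\mapsto(U\to\cdots\to U)$ (via the initial object of the relevant comma category) to identify each level with $F^!(V)$. Your extra care about the bicosimplicial bookkeeping and the simplicial compatibility of the sections $s_\bullet$ fills in details the paper leaves implicit, but the argument is the same.
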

\begin{proof}
Let $(\mc{A}_k)_q\mc{B}_k = (\mc{A}_k)_q\mc{B}_k(M)$ be the category
whose objects are strings of $q$ composable morphisms in $\mc{A}_k$,
and whose morphisms are natural transformations of such diagrams with
component maps in $\mc{B}_k$.  Now, the domain of the projection
can be thought of as a totalization of a totalization (of the
cosimplicial replacement of $F_p$).  We can switch the order of
the totalizations and rewrite the domain as $\Tot(q\mapsto
\wh{F}_q^!(V))$, where $\wh{F}_q:(\mc{A}_k)_q\mc{B}_k(V) \to
\ctgy{Top}$ is the functor sending $U_0 \to \cdots \to U_q
\mapsto F(U_0)$.  Thus, if the map $\wh{F}_q^!(V) \to F^!(V)$
is a homotopy equivalence for each $q$, then so will the original
map be.
\par
We can write this map as the map
\begin{equation*}
\holim_{(\mc{A}_k)_q\mc{B}_k(V)} \wh{F}_q \to \holim_{\mc{B}_k(V)} F
\end{equation*}
which is induced by the inclusion functor $J:\mc{B}_k(V) \to
(\mc{A}_k)_q\mc{B}_k(V)$ sending $U \mapsto U \to \cdots \to U$.
We claim that $J$ is homotopy terminal.  That is, for any
$U_0 \to \cdots \to U_q \in (\mc{A}_k)_q\mc{B}_k(V)$, the
comma category $(U_0 \to \cdots \to U_q \downarrow J)$ is
nonempty and contractible.  This follows since the comma category
has an intial object, namely $U_q \to \cdots \to U_q$.  Therefore,
the induced map of homotopy limits is a homotopy equivalence, and
this proves the lemma.
\end{proof}

\begin{thm}\label{thm:FBangGood}
$F^!$ is a good cofunctor.
\end{thm}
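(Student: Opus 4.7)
The plan is to leverage the two preceding lemmas, \cref{thm:FpBangGood} and \cref{thm:TotFpBangToFBang}, which together reduce the claim to a statement about how totalizations interact with homotopy equivalences and homotopy limits. The strategy is to replace $F^!$ by the totalization of the cosimplicial cofunctor $p \mapsto F_p^!$ via \cref{thm:TotFpBangToFBang}, verify each of the two goodness conditions at the level of each $F_p^!$ using \cref{thm:FpBangGood}, and then push these verifications through the totalization.

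For condition (a), suppose $V \to V'$ is an isotopy equivalence in $\mc{O}$. By \cref{thm:FpBangGood}, the induced map $F_p^!(V) \to F_p^!(V')$ is a homotopy equivalence for each $p \geq 0$. I would then invoke the fact that our homotopy-invariant totalization preserves levelwise weak equivalences of cosimplicial spaces to conclude that the induced map $\Tot(p \mapsto F_p^!(V)) \to \Tot(p \mapsto F_p^!(V'))$ is a homotopy equivalence. Combining this with the natural identification $\Tot(p \mapsto F_p^!(V)) \simeq F^!(V)$ from \cref{thm:TotFpBangToFBang} (which is natural in $V$), I obtain that $F^!(V) \to F^!(V')$ is a homotopy equivalence.

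For condition (b), given a string $V_0 \to V_1 \to \cdots$ in $\mc{O}$ with union $V = \bigcup_i V_i$, I would consider the composite
\begin{equation*}
F^!(V) \simeq \Tot\parens{p \mapsto F_p^!(V)} \to \Tot\parens{p \mapsto \holim_i F_p^!(V_i)} \simeq \holim_i \Tot\parens{p \mapsto F_p^!(V_i)} \simeq \holim_i F^!(V_i).
\end{equation*}
The first and last equivalences are \cref{thm:TotFpBangToFBang}. The middle map is a levelwise equivalence of cosimplicial spaces induced (for each $p$) by the goodness of $F_p^!$ from \cref{thm:FpBangGood}, hence induces an equivalence on totalizations as in part (a). The middle identification swaps $\Tot$ with $\holim_i$, which is valid since totalization is itself a homotopy limit (over $\mathbf{\Delta}$) and homotopy limits commute with one another.

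The step I expect to require the most care is the commutation of $\Tot$ with the sequential $\holim_i$, and more generally the assertion that levelwise equivalences of cosimplicial spaces yield equivalences of (homotopy-invariant) totalizations. Both of these are standard consequences of modelling $\Tot$ as $\holim_{\mathbf{\Delta}}$ (as noted in \cref{sec:Conventions} following \cite{BoKa}), but I would want to state them cleanly and point to the convention already adopted in the paper, so that the two displayed chains of equivalences above are unambiguous. Once those formal points are in hand, parts (a) and (b) both follow immediately, completing the proof that $F^!$ is a good cofunctor.
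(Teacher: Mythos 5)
Your proposal is correct and follows essentially the same route as the paper: both reduce goodness of $F^!$ to the levelwise goodness of the $F_p^!$ via \cref{thm:TotFpBangToFBang}, using that homotopy-invariant totalization preserves levelwise equivalences and commutes with the sequential homotopy limit. No gaps; the formal points you flag about $\Tot$ as $\holim_{\mathbf{\Delta}}$ are exactly the ones the paper relies on as well.
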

\begin{proof}
Let $V\to V'$ be an isotopy equivalence in $\mc{O}$.  By the
previous lemma, the horizontal maps in the square
\begin{equation*}
\xymatrix{
\Tot(p\mapsto F_p^!(V)) \ar[r]^-\simeq & F^!(V) \\
\Tot(p\mapsto F_p^!(V')) \ar[r]^-\simeq \ar[u] &
  F^!(V') \ar[u] \\
}
\end{equation*}
are homotopy equivalences, so showing the righthand map is a
homotopy equivalence for part (a) of goodness is equivalent to
showing so for for the lefthand map.  In codegree $p$,
the lefthand map is just the map $F_p^!(V') \to F_p^!(V)$, which
is a homotopy equivalence since $F_p^!$ is good.  So the overall
map of totalizations is a homotopy equivalence since the underlying
map of cosimplicial spaces is a homotopy equivalence in each
codegree.

For part (b) of goodness, let $V_0 \to V_1 \to \cdots$ be a
string in $\mc{O}$.  Similiary, to show that the map
\begin{equation*}
F^!\parens{\bigcup_i V_i} \to \holim_i F^!(V_i)
\end{equation*}
is a homotopy equivalence, it suffices to show that the map
\begin{equation*}
\Tot\bracks{p \mapsto F_p^!\parens{\bigcup_i V_i}} \to
  \holim_i \Tot\bracks{p \mapsto F_p^!(V_i)}
\end{equation*}
is one.  But the homotopy limit and totalization commute, so
this is really a map of totalizations of cosimplicial spaces
which in codegree $p$ is the map
\begin{equation*}
F_p^!\parens{\bigcup_i V_i} \to \holim_i F_p^!(V_i).
\end{equation*}
And again, this is a homotopy equivalence since $F_p^!$ is good.
\end{proof}

We come to our main result.
\begin{thm}\label{thm:BkRefinement}
Suppose $\braces{\mc{B}_k}$ and $\braces{\mc{B}_k'}$ are two
choices of special open sets with $\mc{B}_1' \subseteq \mc{B}_1$
(which implies that $\mc{B}_k' \subseteq \mc{B}_k$ as well).
Let $F:\mc{B}_k\to\ctgy{Top}$ be an isotopy cofunctor, and
let $G$ denote its restriction to $\mc{B}_k'$.  Then the map
$F^!(V) \to G^!(V)$ is a homotopy equivalence for all $V \in \mc{O}$.
\end{thm}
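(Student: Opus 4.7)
The plan is to reduce the theorem, via the cosimplicial decomposition $F^!(V) \simeq \Tot(p\mapsto F_p^!(V))$, to a comparison of classifying spaces of the auxiliary categories $\mc{A}_k(\mc{B}_k)_p(V)$, and then to prove the latter by induction on $p$. First I would apply \cref{thm:TotFpBangToFBang} to both $F$ and $G$ separately. Naturality of that construction in $\mc{B}_k$ yields a commutative square in which the vertical maps $\Tot(p \mapsto F_p^!(V)) \to F^!(V)$ and $\Tot(p \mapsto G_p^!(V)) \to G^!(V)$ are homotopy equivalences; since totalization preserves codegreewise homotopy equivalences, it then suffices to show that $F_p^!(V) \to G_p^!(V)$ is a homotopy equivalence for each fixed $p \geq 0$.

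Next, because $F$ is an isotopy cofunctor and morphisms of $\mc{A}_k(\mc{B}_k)_p$ have components in $\mc{A}_k$, the cofunctor $F_p$ sends all morphisms of $\mc{A}_k(\mc{B}_k)_p(V)$ to homotopy equivalences, and its restriction to the subcategory $\mc{A}_k'(\mc{B}_k')_p(V)$ is exactly $G_p$. Applying \cref{thm:RestrictedHolimEquiv} then reduces the problem to showing that the inclusion $\abs{\mc{A}_k'(\mc{B}_k')_p(V)} \hookrightarrow \abs{\mc{A}_k(\mc{B}_k)_p(V)}$ is a homotopy equivalence. I would prove this by induction on $p$: for $p > 0$, \cref{thm:HofiberAkBk} produces a commuting ladder of quasifibration sequences whose base map is the $p = 0$ case and whose fiber map is the $(p-1)$-case (for a smaller open set $W$), so the inductive hypothesis together with the base case forces the middle map to be an equivalence.

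The main obstacle will be the base case $p = 0$, namely that $\abs{\mc{A}_k'(V)} \hookrightarrow \abs{\mc{A}_k(V)}$ is a homotopy equivalence. For this I would revisit the Segal-style proof of \cref{thm:NerveAk}: its auxiliary space $W$ (built from simplices of $\mc{A}_k$) has a natural subspace $W'$ (built only from simplices of $\mc{A}_k'$), and the same almost-locally-trivial argument shows both $W \to \binom{V}{k}$ and $W' \to \binom{V}{k}$ are homotopy equivalences --- the contractibility of fibers for $W'$ uses that $\mc{B}_1'$, hence $\mc{A}_1'$, still forms a basis for the topology of $M$, so the relevant comma subposet remains codirected. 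A two-out-of-three chase then forces $W' \hookrightarrow W$, and hence (since the projections from $W$ and $W'$ to the respective classifying spaces are euclidean-fiber equivalences) the inclusion $\abs{\mc{A}_k'(V)} \hookrightarrow \abs{\mc{A}_k(V)}$ itself, to be a homotopy equivalence on each component $\abs{\mc{A}^{(j)}}$ and therefore on the whole.
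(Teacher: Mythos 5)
Your proposal follows the paper's own proof essentially step for step: reduce via \cref{thm:TotFpBangToFBang} to the codegreewise maps $F_p^!(V)\to G_p^!(V)$, apply \cref{thm:RestrictedHolimEquiv} to reduce to the inclusion of classifying spaces, and induct on $p$ using the fibration ladders supplied by \cref{thm:HofiberAkBk}. Your handling of the base case $p=0$ is in fact slightly more careful than the paper's bare citation of \cref{thm:NerveAk}, since you observe that the equivalences to the configuration spaces must be checked to be compatible with the inclusion $\abs{\mc{A}_k'(V)}\hookrightarrow\abs{\mc{A}_k(V)}$ (via the auxiliary spaces $W'\subseteq W$ and two-out-of-three) before concluding.
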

\begin{proof}
Let $V \in \mc{O}$.  By a similar argument as before, it suffices
to check that the map $F_p^!(V) \to G_p^!(V)$ is a homotopy
equivalence for all $p$.  And by \cref{thm:RestrictedHolimEquiv}, it is
enough to check that the inclusion $\abs{\mc{A}'_k(\mc{B}'_k)_p(V)}
\hookrightarrow \abs{\mc{A}_k(\mc{B}_k)_p(V)}$ is a homotopy
equivalence. Note that if $p = 0$, then this is really the inclusion
$\abs{\mc{A}_k'(V)} \hookrightarrow \abs{\mc{A}_k(V)}$, and
this is a homotopy equivalence by \cref{thm:NerveAk}.
If $p > 0$, choose a basepoint $W \in \abs{\mc{A}_k'(V)}$ and
use \cref{thm:HofiberAkBk} to get a diagram of homotopy
fibration sequences
\begin{equation*}
\xymatrix
{
  \abs{\mc{A}_k'(\mc{B}_k')_{p-1}(W)} \ar[r] \ar[d] &
    \abs{\mc{A}_k'(\mc{B}_k')_p(V)} \ar[r] \ar[d] &
    \abs{\mc{A}_k'(V)} \ar[d] \\
  \abs{\mc{A}_k(\mc{B}_k)_{p-1}(W)} \ar[r] &
    \abs{\mc{A}_k(\mc{B}_k)_p(V)} \ar[r] &
    \abs{\mc{A}_k(V)} \\
}
\end{equation*}
The righthand vertical map is again the $p = 0$ case, so is
a homotopy equivalence.  The lefthand vertical map can be
assumed to be a homotopy equivalence by induction on $p$.
Therefore, the middle vertical map is a homotopy equivalence
as well, as we needed to show.
\end{proof}

As an immediate corollary, we get a strengthening of Weiss'
construction of polynomial cofunctors (\cref{thm:WeissFBangPoly}).
\begin{cor}\label{thm:FBangPoly}
Let $F$ be an isotopy cofunctor from $\mc{B}_k\to\ctgy{Top}$.
Then $F^!$ is polynomial of degree $\leq k$.
\end{cor}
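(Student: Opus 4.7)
The plan is to reduce the statement to Weiss' original version \cref{thm:WeissFBangPoly} by using \cref{thm:BkRefinement} to compare the Kan extension $F^!$ built from $\mc{B}_k$ with one built from the full poset $\mc{O}_k$. First I would set $H := F^!\vert_{\mc{O}_k}$. Since $F^!$ is a good cofunctor on $\mc{O}$ by \cref{thm:FBangGood}, in particular it carries isotopy equivalences to homotopy equivalences, so $H$ is an isotopy cofunctor on $\mc{O}_k$. Applying \cref{thm:WeissFBangPoly} to $H$ then yields that $H^! : \mc{O} \to \ctgy{Top}$ is polynomial of degree $\leq k$.

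Next I would identify $H^!$ with $F^!$ up to equivalence. Since $\mc{O}_k$ itself is a valid choice of special open sets (satisfying conditions (a) and (b) at the start of this section) and $\mc{B}_1 \subseteq \mc{O}_1$, the pair $\braces{\mc{O}_k}$ (as the larger choice) and $\braces{\mc{B}_k}$ (as the smaller) satisfies the hypothesis of \cref{thm:BkRefinement}. Applied to $H$ and its restriction $H\vert_{\mc{B}_k}$, that theorem gives a natural homotopy equivalence $H^!(V) \simeq (H\vert_{\mc{B}_k})^!(V)$ for every $V \in \mc{O}$. Moreover, for each $U \in \mc{B}_k$ the poset $\mc{B}_k(U)$ has $U$ as a terminal object, so the canonical projection
\begin{equation*}
F^!(U) = \holim_{U' \in \mc{B}_k(U)} F(U') \longrightarrow F(U)
\end{equation*}
is a homotopy equivalence. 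This produces a natural levelwise equivalence $H\vert_{\mc{B}_k} = F^!\vert_{\mc{B}_k} \to F$ of isotopy cofunctors on $\mc{B}_k$, and since the construction $(-)^!$ is a homotopy right Kan extension and thus preserves such levelwise equivalences, we get $(H\vert_{\mc{B}_k})^! \simeq F^!$. Chaining everything, $F^! \simeq (H\vert_{\mc{B}_k})^! \simeq H^!$, which is polynomial of degree $\leq k$.

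The main subtlety is the identification $F^!\vert_{\mc{B}_k} \simeq F$; it is precisely what guarantees that restricting $F^!$ to $\mc{O}_k$ and then Kan-extending back loses no information, so that the combined force of \cref{thm:WeissFBangPoly} (which needs all of $\mc{O}_k$) and \cref{thm:BkRefinement} (which brings us back to $\mc{B}_k$) really does produce the same cofunctor up to equivalence. This identification itself rests on the standard fact that a homotopy limit over a category with a terminal object computes to the value of the diagram at that terminal object, applied here to $\mc{B}_k(U)$ with terminal object $U$.
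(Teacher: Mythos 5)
Your proposal is correct, and it is exactly the argument the paper intends: the paper's proof consists of the single sentence that the corollary "follows from \cref{thm:WeissFBangPoly} and \cref{thm:BkRefinement}," and your write-up supplies precisely the missing details (setting $H = F^!\vert_{\mc{O}_k}$, applying Weiss to $H$, using \cref{thm:BkRefinement} with $\mc{O}_k \supseteq \mc{B}_k$, and identifying $(H\vert_{\mc{B}_k})^!$ with $F^!$ via the terminal-object argument). No gaps; your version is if anything more complete than the paper's.
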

\begin{proof}
This follows from \cref{thm:WeissFBangPoly} and \cref{thm:BkRefinement}.
\end{proof}

\bibliography{SOS}

\begin{thebibliography}{1}

\bibitem{BoKa}
A.~K. Bousfield and D.~M. Kan.
\newblock {\em Homotopy limits, completions and localizations}.
\newblock Lecture Notes in Mathematics, Vol. 304. Springer-Verlag, Berlin,
  1972.

\bibitem{Dw}
W.~G. Dwyer.
\newblock The centralizer decomposition of {$BG$}.
\newblock In {\em Algebraic topology: new trends in localization and
  periodicity ({S}ant {F}eliu de {G}u\'\i xols, 1994)}, volume 136 of {\em
  Progr. Math.}, pages 167--184. Birkh\"auser, Basel, 1996.

\bibitem{Go1}
Thomas~G. Goodwillie.
\newblock Calculus. {II}. {A}nalytic functors.
\newblock {\em $K$-Theory}, 5(4):295--332, 1991/92.

\bibitem{Go2}
Thomas~G. Goodwillie.
\newblock Calculus. {III}. {T}aylor series.
\newblock {\em Geom. Topol.}, 7:645--711 (electronic), 2003.

\bibitem{Qu}
Daniel Quillen.
\newblock Higher algebraic {$K$}-theory.
\newblock In {\em Proceedings of the {I}nternational {C}ongress of
  {M}athematicians ({V}ancouver, {B}. {C}., 1974), {V}ol. 1}, pages 171--176.
  Canad. Math. Congress, Montreal, Que., 1975.

\bibitem{Se}
Graeme Segal.
\newblock Classifying spaces related to foliations.
\newblock {\em Topology}, 17(4):367--382, 1978.

\bibitem{Th}
R.~W. Thomason.
\newblock Homotopy colimits in the category of small categories.
\newblock {\em Math. Proc. Cambridge Philos. Soc.}, 85(1):91--109, 1979.

\bibitem{We}
Michael Weiss.
\newblock Embeddings from the point of view of immersion theory. {I}.
\newblock {\em Geom. Topol.}, 3:67--101 (electronic), 1999.

\end{thebibliography}
\end{document}